\def\qed{\hfill\ifhmode\unskip\nobreak\fi\quad\ifmmode\Box\else\hfill$\Box$\fi}
\def\ite#1{\hfill\break${}$\hbox to 50pt {\quad(#1)\hfill}}
\def\cG{{\mathcal G}}
\def\cH{{\mathcal H}}
\newtheorem{thm}{Theorem}[section]
\newtheorem{cor}[thm]{Corollary}
\newtheorem{const}[thm]{Example}
\newtheorem{definition}[thm]{Definition}
\newtheorem{lem}[thm]{Lemma}
\newtheorem{conj}[thm]{Conjecture}
\newtheorem{claim}[thm]{Claim}
\begin{document}

\pagestyle{myheadings} 
\markright{{\small{\sc A.~Kostochka,  R. Luo, and D. Zirlin:   Super-pancyclic hypergraphs}}}

\title{\vspace{-0.5in} Super-pancyclic hypergraphs and bipartite graphs}

\author{
{{Alexandr Kostochka}}\thanks{
\footnotesize {University of Illinois at Urbana--Champaign, Urbana, IL 61801
 and Sobolev Institute of Mathematics, Novosibirsk 630090, Russia. E-mail: \texttt {kostochk@math.uiuc.edu}.
 Research %%% of this author
is supported in part by NSF grant  DMS-1600592
and grants  18-01-00353A and 19-01-00682
  of the Russian Foundation for Basic Research.
}}
\and{{Ruth Luo}}\thanks{University of Illinois at Urbana--Champaign, Urbana, IL 61801, USA. E-mail: {\tt ruthluo2@illinois.edu}.
Research %%% of this author
is supported in part by NSF grant  DMS-1600592.
}
\and{{Dara Zirlin}}\thanks{University of Illinois at Urbana--Champaign, Urbana, IL 61801, USA. E-mail: {\tt zirlin2@illinois.edu}.}}

\date{\today}

\maketitle

\vspace{-0.3in}

\begin{abstract}
 We find Dirac-type sufficient conditions for a hypergraph $\cH$ with few edges  to be hamiltonian. 
 We also show that these conditions provide that $\cH$ 
  is {\em super-pancyclic}, i.e.,   for each  $A \subseteq V(\cH)$ with $|A| \geq 3$, $\cH$ contains a Berge cycle
with vertex set $A$.

We  mostly use the language of bipartite graphs, because every bipartite graph is the incidence graph of a multihypergraph. In particular, we extend some results of Jackson on the existence of long cycles in bipartite graphs where the vertices in one part have high minimum degree. Furthermore, we prove a conjecture of Jackson from 1981 on long cycles in 2-connected bipartite graphs.

\medskip\noindent
{\bf{Mathematics Subject Classification:}} 05D05, 05C65, 05C38, 05C35.\\
{\bf{Keywords:}} Berge cycles, extremal hypergraph theory, bipartite graphs.
\end{abstract}

\section{Introduction}

\subsection{Cycles in bipartite graphs}
For integers $n, m,$ and $\delta$ with $\delta \leq m$, we denote by $\cG(n,m,\delta)$ the set of all bipartite graphs with partition $(X, Y)$ such that $|X| = n\geq 2, |Y|=m$ and for every $x \in X$, $d(x) \geq \delta$. Jackson~\cite{jackson,jackson2} proved the following theorem (among several other results) on the existence of long cycles in graphs $\cG(n,m,\delta)$. 

\begin{thm}[Jackson~\cite{jackson}]\label{jackson} If a graph $G\in \cG(n,m,\delta)$ satisfies $n \leq \delta$ and $m \leq 2\delta-2$, then it contains a cycle of length $2n$, i.e., a cycle that covers $X$.\end{thm}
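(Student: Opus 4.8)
I would prove the equivalent statement that $G$ contains a cycle covering $X$, arguing by contradiction through a longest cycle, after two easy reductions. First, $G$ may be taken connected: if two components each met $X$, each would contain at least $\delta$ vertices of $Y$ (every $X$-vertex has all its $\ge\delta$ neighbours inside its own component), forcing $m\ge 2\delta>2\delta-2$. Second, $G$ has a cycle at all, since for any $x\ne x'$ in $X$ we get $|N(x)\cap N(x')|\ge |N(x)|+|N(x')|-m\ge 2\delta-m\ge 2$, a $4$-cycle; hence a longest cycle $C$ exists. Write $|V(C)|=2k$ and suppose for contradiction $k<n$, so some $x_0\in X\setminus V(C)$. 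Put $W=Y\setminus V(C)$, $|W|=m-k$. The basic observation, used repeatedly, is that since $k<n\le\delta$, every vertex $x\in X$ has at most $|Y\cap V(C)|=k$ neighbours on $C$ and hence at least $\delta-k\ge 1$ neighbours in $W$.

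A warm-up that already settles many cases is to look at a longest \emph{path} $P=v_0v_1\cdots v_\ell$ in $G$. A longest path cannot have both endpoints in $X$: if $v_0\in X$ then $N(v_0)\subseteq Y\cap V(P)$ by maximality, so $|Y\cap V(P)|\ge\delta$, and if also $v_\ell\in X$ then $|X\cap V(P)|=|Y\cap V(P)|+1\ge\delta+1>n$, impossible. So if some longest path has an endpoint $v_0\in X$, its other endpoint is in $Y$, $\ell$ is odd, and $|X\cap V(P)|=|Y\cap V(P)|\ge\delta\ge n$; since $X\cap V(P)\subseteq X$ we get $|X\cap V(P)|=n$, i.e.\ $P$ covers $X$ and $\delta=n$, and then $N(v_0)=Y\cap V(P)$ forces $v_0v_\ell\in E(G)$, so $v_0v_1\cdots v_\ell v_0$ is a $2n$-cycle covering $X$. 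Thus we may assume every longest path has both endpoints in $Y$ (which in particular happens whenever $\delta>n$), and in this case I would pass to the longest-cycle argument.

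Returning to $C$: let $P^*=p_0p_1\cdots p_t$ be a \emph{longest path attached to $C$}, meaning $p_0=x_0$, $p_1,\dots,p_{t-1}\notin V(C)$ and $p_t\in V(C)$, with $t$ maximum. Maximality gives $N(x_0)\subseteq V(P^*)\cup V(C)$, so all $\ge\delta-k$ neighbours of $x_0$ in $W$ lie among $p_1,\dots,p_{t-1}$; counting, $t\ge 2(\delta-k)$, and $x_0$ has $\ge\delta-\lfloor t/2\rfloor$ neighbours among the $k$ vertices of $Y\cap V(C)$. Using these neighbours together with $P^*$, I would build \emph{ears} of $C$: if $x_0$ has a neighbour $y$ on $C$ different from $p_t$ (when $p_t\in Y$), then $y,p_0,p_1,\dots,p_t$ is an ear of length $t+1$ between two vertices of $C$, and replacing the shorter of the two arcs of $C$ it spans yields a longer cycle whenever that arc has length $\le t$. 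When instead the relevant arcs are too long, $x_0$ must have many neighbours on $C$ (by the count above), forcing two of them to be ``consecutive'' on $C$; combined with the fact that the $X$-vertex between them, and all other $X$-vertices of $C$, also have $\ge\delta-k$ neighbours in $W$, a short detour through $W$ again produces a longer cycle — unless a cardinality bound of the form ``$|Y\cap V(C)\cap N(x_0)|$ small, $|W|$ small'' is violated, at which point double-counting the edges between $X\cap V(C)$ and $W$ (at least $k(\delta-k)$ of them) against $|W|=m-k\le 2\delta-2-k$ gives the final contradiction.

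The main obstacle is exactly this last interplay: a crude count leaves boundary cases (typically $m$ close to $2\delta-2$ with $k$ small), so the ear/detour arguments must be pushed to pin down precisely which arcs of $C$ can be ``short'' and which $X$-vertices of $C$ have neighbours where, and it is likely necessary to choose $C$ extremally among longest cycles (for instance minimising the number of vertices outside $C$, or the total length of arcs between consecutive attachment points) so that the ``swap''-type moves that keep $|V(C)|$ fixed themselves terminate. Handling this bookkeeping — rather than any single extension step — is where the real work of the proof lies.
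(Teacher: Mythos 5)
Your proposal is an incomplete sketch, and you say as much yourself: the closing paragraph concedes that the decisive ``bookkeeping'' is ``where the real work of the proof lies,'' which is precisely the part left unwritten. There is also a concrete error in the step that is written out: after choosing $P^*=p_0p_1\cdots p_t$ with $p_0=x_0$ fixed, the interior off $C$, the end $p_t\in V(C)$, and $t$ maximum, you assert ``Maximality gives $N(x_0)\subseteq V(P^*)\cup V(C)$.'' That does not follow. A neighbour $y$ of $x_0$ outside $V(P^*)\cup V(C)$ cannot be appended to $P^*$ because $P^*$ is required to start at $x_0$: the path $yx_0p_1\cdots p_t$ is a longer attachment, but it begins at $y$, not at $x_0$, and so is not a competitor under your extremal choice. (The usual fix --- maximise $t$ over \emph{all} free endpoints $p_0\notin V(C)$, not just $p_0=x_0$ --- does give $N(p_0)\subseteq V(P^*)\cup V(C)$, but then $p_0$ need not lie in $X$, which disturbs the subsequent degree count.) Even granting the claim, the first ear estimate does not close on its own: the ear of length $t+1\geq 2(\delta-k)+1$ exceeds the shorter $y$--$p_t$ arc of $C$ only when $k<(2\delta+1)/3$, while $k$ may be as large as $\delta-1$, so a wide range of $k$ is left unhandled by the single extension step.

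For comparison, the paper does not prove Theorem~\ref{jackson} directly --- it cites Jackson --- but it does reprove a strengthening, namely Theorem~\ref{jackson2}, the characterisation of the extremal graphs at $m=2\delta-1$, which implies Theorem~\ref{jackson} because both extremal families $G_1(n)$ and $G_2(a,b)$ have exactly $2\delta-1$ vertices in $Y$. That argument is organised around the \emph{tight pair} $(C,x)$ (a longest cycle $C$ together with $x\in X\setminus V(C)$ maximising $|N(x)\cap V(C)|$), the crossing notion and Lemma~\ref{crossing}, and the two technical Lemmas~\ref{lemT} and~\ref{lemT2}; the case split is on $t=|N(x)\cap V(C)|$ rather than on the length of an attached path. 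The tight-pair device is exactly the kind of ``extremal choice among longest cycles'' that you anticipate being ``likely necessary'' at the end of your sketch; making your argument rigorous would, in effect, require reinventing it.
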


The following two constructions with $m = 2\delta-1$ show that Theorem~\ref{jackson} is best possible.

\begin{const}\label{con1}
For $\delta=n$, let $G_1(n) \in \cG(\delta,2\delta-1,\delta)$ be obtained from a copy of $K_{\delta,\delta-1}$ where every vertex in $X$ has an additional neighbor of degree 1. Then the longest cycle of $G_1(n)$ has length $2(n-1)$.\end{const}

\begin{const}\label{con2}
Fix positive integers $a \geq b$ such that $a + b = n$. Let $G_2(a,b) \in \cG(n,2\delta-1, \delta)$ be the bipartite graph obtained from a copy $H_1$ of $K_{a, \delta}$ and a copy $H_2$ of $K_{b,\delta}$ by gluing together a vertex of $H_1$ in a  part of size $\delta$
and a vertex of $H_2$ in a  part of size $\delta$. Then the longest cycle of $G_2(a,b)$ has length $2a \leq 2(n-1)$. 
\end{const}

Both examples above contain cut vertices. Jackson  conjectured that if  $G \in \cG(n,m,\delta)$ is 2-connected, then we may relax the upper bound on $m$.

\begin{conj}[Jackson~\cite{jackson}]\label{jacksonconj} Let $m,n,\delta$ be integers. If some graph $G \in \cG(n,m,\delta)$ is 2-connected and satisfies 
\begin{enumerate}
\item[(i)]$m \leq 3\delta-5$ if $n \leq \delta$, or
\item[(ii)] $m\leq \lfloor \frac{2(n-\alpha)}{\delta-1-\alpha} \rfloor (\delta-2) + 1$ if $n \geq \delta$,
\end{enumerate}
where $\alpha = 1$ if $\delta$ is even and $\alpha = 0$ if $\delta$ is odd, then $G$ contains a cycle of length $2\min(n,\delta)$.\end{conj}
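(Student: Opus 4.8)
The plan is to prove the statement of Conjecture~\ref{jacksonconj} by contradiction, by showing that a $2$-connected $G\in\cG(n,m,\delta)$ with no cycle of length $2k$, where $k:=\min(n,\delta)$, is forced to have $|Y|=m$ larger than permitted by (i) (when $n\le\delta$) or (ii) (when $n\ge\delta$). Fix such a $G$ and a longest cycle $C$ with $|V(C)|=2\ell$; then $\ell<k$, so $\ell\le n-1$ and $\ell\le\delta-1$ hold simultaneously. The pivotal first observation is that $|V(C)\cap Y|=\ell<\delta\le d(x)$ for \emph{every} $x\in X$, so every $X$-vertex---on $C$ or not---has at least $\delta-\ell\ge1$ neighbours in $Y\setminus V(C)$. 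Hence there is an $X$-vertex $x_0\notin V(C)$; the component $B$ of $G-V(C)$ containing $x_0$ satisfies $|V(B)\cap Y|\ge\delta-\ell$; and there are at least $\ell(\delta-\ell)$ edges from $V(C)\cap X$ into $Y\setminus V(C)$. In case (i), Theorem~\ref{jackson} lets us also assume $m\ge 2\delta-1$ (otherwise $G$ already has a cycle covering $X$), so $2\delta-1\le m\le3\delta-5$ and in particular $\delta\ge4$.

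\textbf{Rerouting step.} Using $2$-connectivity (Menger's theorem, or a fan from $B$ to $C$) I would fix two internally disjoint $B$--$C$ paths ending at distinct vertices of $C$; splicing the resulting detour in place of either arc of $C$ between those endpoints must not create a longer cycle. The standard consequences are: (a) both such arcs are at least as long as the detour, so the attachment vertices of every bridge are spread out along $C$; and (b) no $X$-vertex of the detour can have a neighbour on $C$ lying outside a short window around the two endpoints. Applying (b) to all $X$-vertices of $B$---each with $\ge\delta-\ell$ neighbours inside $B$ and with its on-$C$ neighbours confined to such windows---I expect to deduce that $B$, together with the arcs of $C$ cut off by the attachments of $B$, uses up many vertices of $Y$: morally, each ``attachment interval'' behaves like a (possibly truncated) copy of one of the complete-bipartite lobes in Constructions~\ref{con1}--\ref{con2} and contributes about $\delta-2$ vertices of $Y$ appearing nowhere else.

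\textbf{Counting step.} For case (i) the target is $m\ge3\delta-4$. I would split $Y$ into the $\ell$ vertices on $C$ and the vertices off $C$, and show the latter number at least $2\delta-4$: one batch of $\ge\delta-\ell$ of them lies inside $B$, and a second, essentially disjoint batch comes from the $\ge\ell(\delta-\ell)$ edges leaving $V(C)\cap X$---by the rerouting restrictions these cannot all reuse the vertices of $B$, and a pigeonhole/theta-rerouting argument (an off-$C$ $Y$-vertex with three neighbours on $C$, or with a neighbour off $C$, would lengthen $C$) caps how much the two batches overlap. This yields $m\ge3\delta-4$, contradicting (i). For case (ii) ($n\ge\delta$) I would run the same machinery globally: partition $C$ into $t$ consecutive arcs, anchored at attachment vertices of the bridges, where $t$ is the number of arcs of $X$-capacity about $\delta-1$ (respectively $\delta-2$ when $\delta$ is even) needed to accommodate $n$ vertices of $X$ without forming a $2\delta$-cycle---this is where the quantity $\big\lfloor\frac{2(n-\alpha)}{\delta-1-\alpha}\big\rfloor$ comes from---show each arc contributes $\ge\delta-2$ private $Y$-vertices plus one shared portal, and conclude $m\ge t(\delta-2)+1$, more than (ii) permits. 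The correction $\alpha$ appears precisely because, when $\delta$ is even, the alternating path a cycle traces through an arc cannot be as long, so each arc is effectively one unit lighter in $X$-capacity and one heavier in $Y$, which is what turns $\delta-1$ into $\delta-1-\alpha$ in the denominator. One may reduce case (ii) to a bounded-$n$ instance, or proceed by induction on $n$.

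\textbf{Main obstacle.} The genuine work is in the rerouting and counting steps: upgrading the soft ``no long detour'' facts into the sharp claim that each attachment interval (or arc) contributes $\delta-2$ \emph{new} vertices of $Y$ with essentially no double counting, tightly against Constructions~\ref{con1} and~\ref{con2}. I expect this to require a detailed case split by the number and spacing of attachment vertices, with a separate, easier treatment when some bridge is large or some window is wide; delicate parity bookkeeping to produce the floor function and the $\alpha$-term exactly; and some care in joining the regimes $n\le\delta$ and $n\ge\delta$---most naturally by handling $n\ge\delta$ inductively with the $n\le\delta$ outcome as the base case.
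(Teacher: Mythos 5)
The paper proves only part (i) of this conjecture (as Theorem~\ref{mainj}); part (ii) is explicitly left open, with only a ``significantly weaker version'' cited in the concluding remarks. So your sketch for (ii) goes well beyond anything the paper establishes, and since you yourself leave the ``delicate parity bookkeeping'' and the arc decomposition unresolved, (ii) should be regarded as not addressed. Your description of where the formula $\lfloor 2(n-\alpha)/(\delta-1-\alpha)\rfloor$ comes from (``arcs of $X$-capacity about $\delta-1$'' to ``accommodate $n$ vertices of $X$'') also does not match the form of the bound: a capacity argument along those lines would naturally produce $n/(\delta-1-\alpha)$, not $2(n-\alpha)/(\delta-1-\alpha)$, so even the heuristic as stated is off.

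For part (i) your high-level direction (contradiction, longest cycle $C$ of length $2\ell<2n$, use $2$-connectivity to reroute, count $Y$-vertices to force $m\ge 3\delta-4$) agrees with the paper's, and the preliminary observations (every $x\in X$ has $\ge\delta-\ell\ge 1$ neighbours off $C$, hence at least $\ell(\delta-\ell)$ edges from $V(C)\cap X$ into $Y\setminus V(C)$) are correct. But this is a plan, not a proof: the rerouting step and the counting step are both left soft, with repeated hedges (``I expect'', ``morally'', ``a pigeonhole/theta-rerouting argument \ldots caps how much the two batches overlap''), and you explicitly flag the main obstacle---controlling double-counting sharply enough to reach $3\delta-4$---without resolving it. The paper's mechanism for doing exactly that is missing from your proposal: (a) the \emph{tight pair} $(C,x)$, chosen to maximize $|N(x)\cap V(C)|$ over all longest cycles $C$ and all $x\in X-V(C)$, which makes every rerouting that keeps $C$'s length but changes its vertex set usable as a constraint (Claims~\ref{neighbor}, \ref{neighbor3}); (b) the notion of \emph{crossing} vertices and Lemma~\ref{crossing}, $d_C(x_i)+d_C(x_j)\le |V(C)\cap Y|+2$ for non-crossing pairs, which is the device that turns soft ``no long detour'' facts into the sharp degree-sum bound; and (c) the case split on $t=|N(x)\cap V(C)|$, with Lemmas~\ref{lemT} and~\ref{lemT2} handling the middle and extreme ranges. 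Without an organizing extremal choice like the tight pair, the ``attachment interval'' decomposition you describe is unlikely to control overlap tightly enough, which is precisely the obstacle you identify.
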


If true, both (i) and (ii) would be best possible. 

\begin{const}\label{con5}
For (i), fix positive integers $n_1 \geq n_2 \geq n_3$ such that $n_1 + n_2 + n_3 = n$.
 Let $G_3(n_1, n_2, n_3) \in \cG(n,3\delta-4, \delta)$ be the bipartite graph obtained from $K_{\delta-2, n_1} \cup K_{\delta-2, n_2} \cup K_{\delta-2, n_3}$  by adding two vertices $a$ and $b$ that are both adjacent to every vertex in the parts of size $n_1, n_2$, and $n_3$. Then a longest cycle in $G_3(n_1, n_2, n_3)$ has length $2(n_1+n_2) \leq 2(n-1)$.  
 \end{const}
 
 An extremal construction for (ii) is given in~\cite{jackson}.

In this paper we prove part (i) of Conjecture~\ref{jacksonconj}.

\begin{thm}\label{mainj} Fix integers $n,m,\delta$ such that $n \leq \delta \leq m \leq 3\delta-5$. If $G \in \cG(n,m,\delta)$ is 2-connected, then $G$ contains a cycle of length $2n$, i.e., a cycle that covers $X$.
\end{thm}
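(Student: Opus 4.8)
The plan is a proof by contradiction: take a longest cycle $C$, use $2$-connectivity to attach the vertices of $X$ that $C$ misses, and force $|Y|\ge 3\delta-4$.

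First reduce. If $m\le 2\delta-2$, the statement is exactly Theorem~\ref{jackson} ($2$-connectivity is not even needed), so I may assume $2\delta-1\le m\le 3\delta-5$; in particular $\delta\ge 4$. Suppose $G$ has no cycle covering $X$, and let $C$ be a longest cycle of $G$, chosen, subject to that, to optimize a secondary extremal parameter (e.g.\ to minimize the number of components of $G-V(C)$, or to maximize the number of edges inside $V(C)$). Since $G$ is bipartite, $C$ alternates between $X$ and $Y$; put $k:=|V(C)\cap X|=|V(C)\cap Y|$, so $|C|=2k$, and, since $C$ misses a vertex of $X$, $k\le n-1\le\delta-1$. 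Hence $m-k\ge 1$ and $n-k\ge 1$, so both $X$ and $Y$ have vertices off $C$; orient $C$.

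Next, attach the missed part of $X$. Let $B$ be a bridge of $C$ whose interior $K:=V(B)\setminus V(C)$ meets $X$, with attachment set $S:=N_G(K)\cap V(C)$; by $2$-connectivity $|S|\ge 2$. Write $S=\{c_1,\dots,c_t\}$ in cyclic order on $C$ and let $A_i$ be the segment of $C$ from $c_i$ to $c_{i+1}$, so $\sum_i|A_i|=2k$. The workhorse is the \emph{swap inequality}: for any path $P$ from $c_i$ to $c_j$ ($i\ne j$) with interior in $K$, replacing either of the two $c_i$-to-$c_j$ arcs of $C$ by $P$ yields a cycle, so maximality of $|V(C)\cap X|$ forces the interior of $P$ to contain no more vertices of $X$ than the interior of \emph{each} of those two arcs. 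Applying this to a path through a fixed $x_0\in K\cap X$ coming from the Menger fan between $x_0$ and $S$, and using that every $C$-neighbour of an $X$-vertex of $K$ lies in $S$, I would extract a rigid picture: the segments $A_i$ organize into (essentially) two ``heavy'' segments, and $K$ together with $S$ mimics the gadget $\{a,b\}$ plus the three complete bipartite blocks of Construction~\ref{con5}. The final count then follows: every $x\in X$ has $\ge\delta$ neighbours, all in $Y$; the $X$-vertices in the two heavy segments and in $K$ split into at least three groups whose neighbourhoods in $Y$ are pairwise disjoint except for the two ``transit'' vertices playing the role of $a,b$ (a $Y$-vertex adjacent to $X$-vertices from two groups could be inserted into $C$ to cover strictly more of $X$), so each group contributes $\ge\delta-2$ private vertices of $Y$, giving $|Y|\ge 3(\delta-2)+2=3\delta-4>m$, a contradiction. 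The exact threshold $3\delta-5$ rather than $3\delta-4$ drops out of the parity bookkeeping in the swap inequality — whether the relevant segment lengths are even or odd — mirroring the role of $\alpha$ in Conjecture~\ref{jacksonconj}.

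The main obstacle is the rigidity step: proving that a longest cycle missing $X$ forces the ``three almost-disjoint groups'' structure. This requires controlling bridges with arbitrarily many attachments and with arbitrarily complicated interiors, and the crux is to show that if $S$ has three attachments that are spread out along $C$, or if the neighbourhoods of the relevant $X$-vertices overlap even slightly across groups, then some rerouting of $C$ through $K$ covers strictly more of $X$. Making these reroutings yield the constant $3\delta-4$ and not a weaker bound is where essentially all of the case analysis lives; the reduction to $m\ge 2\delta-1$ and the concluding count are routine.
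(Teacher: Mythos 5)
The high-level plan — contradiction via a longest cycle, $2$-connectivity to reach it, and a final count giving $|Y|\ge 3\delta-4$ — matches the paper, but the execution leaves the crucial middle of the argument entirely unproven, and the structural picture you reach for is not what the paper actually establishes.

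The central gap is what you yourself flag as ``the rigidity step.'' You assert, without proof, that the bridge $K$ together with its attachment set $S$ forces a decomposition of $X$ into three groups with pairwise almost-disjoint $Y$-neighbourhoods (``mimicking'' Construction~\ref{con5}). This is not how the paper proceeds, and there is no reason to expect the extremal configuration to emerge structurally from a longest cycle: the paper instead works directly with degree-sum inequalities and never proves anything like a three-block decomposition. Concretely, the paper fixes a \emph{tight pair} $(C,x)$ — a longest cycle together with a vertex $x\in X\setminus V(C)$ maximizing $|N(x)\cap V(C)|$ over all longest cycles — and splits on $t:=|N(x)\cap V(C)|$. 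For $t\le 1$ it takes two internally disjoint paths from $x$ to $C$, identifies two vertices $x_i,x_{j+1}\in X\cap V(C)$ adjacent to the landing points, proves they are not ``crossing'' on $C$, and invokes Lemma~\ref{crossing} to bound $d_C(x_i)+d_C(x_{j+1})\le\ell+2$; the count then falls out. For $2\le t\le \ell-1$ the counting is delegated to Lemma~\ref{lemT}, which itself has a delicate case analysis on whether certain pairs are crossing, and for $t=\ell$ it uses Lemma~\ref{lemT2} to produce a cut vertex. Your proposal replaces all of this with an unproved structural claim, so it does not constitute a proof.

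Two further points. First, your choice of secondary extremal parameter (minimizing components of $G-V(C)$ or maximizing edges inside $V(C)$) is not the one that makes the argument go; the whole point of the tight-pair condition is that it lets you conclude $|N(x_k)\cap V(C)|\le t$ after a cycle exchange, which is what drives the bounds in Lemma~\ref{lemT} and in Cases 1.1 and 1.2. Second, your speculation that the threshold $3\delta-5$ versus $3\delta-4$ comes from ``parity bookkeeping'' mirroring $\alpha$ in Conjecture~\ref{jacksonconj} is off: $\alpha$ appears only in part (ii) of the conjecture; in part (i) the contradiction is simply that every branch of the case analysis yields $m=|Y|\ge 3\delta-4>3\delta-5$, with no parity issues.
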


 We also prove the following extension of Theorem~\ref{jackson}.

\begin{thm}\label{jackson2} Let $\delta\geq n$ and $m\leq 2\delta-1$. If $G \in \cG(n,m, \delta)$ does not contain a cycle of length $2n$, then 
 either $G = G_1(n)$
in Example~\ref{con1} or $G = G_2(a,b)$ for some  $a$ and $b$ with $a + b = n$ in Example~\ref{con2}. 
\end{thm}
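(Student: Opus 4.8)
The plan is to reduce to the boundary case $m=2\delta-1$, to observe that the extremal $G$ cannot be $2$-connected, and then to recover the two families from the structure of $G$ near a cut vertex. The case $n=2$ is immediate: there the absence of a $4$-cycle forces $|N(x_1)|=|N(x_2)|=\delta$, $|N(x_1)\cap N(x_2)|=1$ and $Y=N(x_1)\cup N(x_2)$, i.e.\ $G=G_2(1,1)$. So assume $n\ge 3$, hence $\delta\ge 3$. If $m\le 2\delta-2$ then Theorem~\ref{jackson} already gives a cycle of length $2n$, so we may assume $m=2\delta-1$. Moreover $G$ must be connected: two distinct components each meeting $X$ would each contain at least $\delta$ vertices of $Y$ (every vertex of $X$ sends all of its edges inside its own component), forcing $m\ge 2\delta$; and a component disjoint from $X$ is an isolated vertex of $Y$, whose deletion leaves a graph in $\cG(n,2\delta-2,\delta)$ to which Theorem~\ref{jackson} applies. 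Finally, for $\delta\ge 4$ we have $m=2\delta-1\le 3\delta-5$, so a $2$-connected $G$ would, by Theorem~\ref{mainj}, contain a cycle of length $2n$; thus $G$ has a cut vertex (the remaining case $\delta=n=3$ involves only finitely many graphs and is checked directly).

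Fix a cut vertex $c$ and, for each component $D$ of $G-c$, write $G_D=G[V(D)\cup\{c\}]$. If $D$ meets $X$ in some $x$ then $|Y\cap V(G_D)|\ge d(x)\ge\delta$; call $D$ \emph{heavy}. If $D$ is disjoint from $X$ then $D$ is a single vertex of $Y$ whose unique neighbor is $c$ (so $c\in X$); call $D$ a \emph{pendant}. The sets $V(G_D)$ pairwise meet only in $c$, so $\sum_D\bigl(|Y\cap V(G_D)|-[c\in Y]\bigr)=2\delta-1-[c\in Y]$; since each heavy summand is at least $\delta-[c\in Y]$ and each pendant summand is at least $1$, at most two components are heavy. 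Suppose $c\in Y$. Then every component of $G-c$ is heavy and there are at least two of them, hence exactly two, each with $|Y\cap V(G_D)|=\delta$. As every $x\in X\cap V(D)$ is then forced to be adjacent to all $\delta$ of these vertices of $Y$, $G_D$ is complete bipartite, $G_D=K_{a_D,\delta}$ with $a_D=|X\cap V(D)|$; so $G$ is obtained from $K_{a,\delta}$ and $K_{b,\delta}$ (with $a+b=n$) by gluing them at $c\in Y$, i.e.\ $G=G_2(a,b)$.

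Now assume every cut vertex of $G$ lies in $X$, and fix one, $c\in X$. The count forces a single heavy component $D^{*}$ — so $X\cap V(D^{*})=X\setminus\{c\}$ — together with $q\ge 1$ pendants at $c$; hence $D^{*}$ has $2\delta-1-q\le 2\delta-2$ vertices of $Y$, every $x\in X\setminus\{c\}$ has all of its $\ge\delta$ neighbors inside $D^{*}$, and so $D^{*}\in\cG(n-1,\,2\delta-1-q,\,\delta)$. By Theorem~\ref{jackson}, $D^{*}$ contains a cycle $C_0$ of length $2(n-1)$ covering $X\setminus\{c\}$. The heart of the argument is to show that $C_0$ — or some length-$2(n-1)$ cycle of $D^{*}$ — can be rerouted through $c$ together with a vertex of $Y$ not yet used, producing the forbidden $2n$-cycle, \emph{unless} $D^{*}$ is as rigid as the tightness of Theorem~\ref{jackson} allows: namely $D^{*}$ is a copy of $K_{n-1,n-1}$ with one pendant vertex of $Y$ attached to each vertex of $X\setminus\{c\}$, with $\delta=n$, $q=1$, and $c$ adjacent to all $n-1$ vertices of $Y$ of degree $\ge2$ — in which case $G=G_1(n)$. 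Equivalently, deleting all pendant vertices of $Y$ yields a $2$-connected core $G'\in\cG(n,m',\delta')$ with $\delta'\ge 2$: if $m'\ge n$ one shows, by a short additional argument that keeps track of the few vertices of $X$ whose degrees dropped, that $G'$ — hence $G$ — has a $2n$-cycle; while $m'\le n-1$, together with $\sum_x j_x=2\delta-1-m'$ for the pendant counts $j_x$ and the inequalities $j_x\ge\delta-d_{G'}(x)$, forces $\delta=n$, $m'=n-1$, every $j_x=1$, and $G'=K_{n,n-1}$, i.e.\ $G=G_1(n)$.

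The delicate step, and the expected main obstacle, is precisely the stability argument just invoked: showing that a cycle covering $X\setminus\{c\}$ in $D^{*}$ fails to be extendable through $c$ and an unused vertex of $Y$ exactly when $D^{*}$ lies in the single rigid configuration above — essentially a tightness analysis of Jackson's rotation/extension argument carried out in the presence of one extra vertex of $X$ of possibly reduced degree. The remaining ingredients are routine: the finite check for $\delta=n=3$, the observation from Examples~\ref{con1} and~\ref{con2} that $G_1(n)$ and $G_2(a,b)$ indeed have no cycle of length $2n$, and the two elementary counting inequalities.
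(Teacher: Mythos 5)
Your approach diverges genuinely from the paper's: the paper proves this directly with the tight-pair machinery of Section~2 (fixing a tight pair $(C,x)$, setting $t=|N(x)\cap V(C)|$, and running a four-way case analysis on $t$ using Lemma~\ref{lemT} for the middle range and Lemma~\ref{lemT2} when $t=\ell$), whereas you reduce to $m=2\delta-1$, invoke Theorem~\ref{mainj} to force a cut vertex, and analyze the block structure around it. The first half of your argument is clean and correct --- the reduction to $m=2\delta-1$, the connectedness claim, the existence of a cut vertex when $\delta\ge 4$, and, in particular, the entire case $c\in Y$ leading to $G=G_2(a,b)$ via the tight count $\sum_D(|Y\cap V(G_D)|-1)=2\delta-2$ forcing exactly two heavy components of exactly $\delta$ each. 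That part is a nice alternative to the paper's Case~1/Case~2 of the tight-pair proof.

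However, the case $c\in X$ has a genuine gap, and you say so yourself: the identification of $G_1(n)$ is reduced to a ``stability argument'' that is sketched, not proved. Two specific unjustified steps: (a) you assert that deleting all pendant vertices of $Y$ yields a 2-connected core $G'$ --- but if $G$ has a second cut vertex $c'\in X$ whose only job is to separate off pendants, then after pendant deletion $c'$ still might be a cut vertex separating a nontrivial piece, and more importantly nothing you've said rules out the deleted-pendant graph having a cut vertex that was already a cut vertex in $G$; (b) when $m'\ge n$ you claim a ``short additional argument'' gives a $2n$-cycle in $G'$, but the only degree information available is $\delta'\ge 2$ (the drops $j_x$ can concentrate on a single vertex), so neither Theorem~\ref{jackson} (needs $\delta'\ge n$ and $m'\le 2\delta'-2$) nor Theorem~\ref{mainj} (needs $\delta'\ge n$ and $m'\le 3\delta'-5$) applies, and you would need a fresh rotation/extension argument --- which is precisely what the paper's Lemma~\ref{lemT2} supplies. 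In the paper, that lemma shows that when $N(x)\supseteq Y\cap V(C)$, every $x_i\in V(C)\cap X$ separates each of its external neighbors from the rest of $C$; combined with $\ell\le n-1\le\delta-1$ this pins down $G_1(n)$ in a few lines. Without that lemma or an equivalent, your case $c\in X$ is not a proof. Also, the finite check for $\delta=n=3$ is asserted but not exhibited. The approach is plausible and the $G_2$ branch is complete, but as written the $G_1$ branch --- the harder of the two --- is missing its core argument.
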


The setup of  bipartite graphs with large degrees in one part is  useful for finding long cycles in hypergraphs with high minimum degree.

\subsection{Berge cycles in hypergraphs}

A {\em hypergraph} $\cH$ is a set of vertices $V(\cH)$ and a set of edges $E(\cH)$ such that each edge is a subset of $V(\cH)$. Often we take $V(\cH) = [n]$ where $[n]:=\{1,\ldots, n\}$ is the set of the first $n$ integers.

We consider hypergraphs without restrictions on edge cardinality (i.e., our hypergraphs may have edges of any size). The degree of a vertex $v$, denoted $d(v)$, is the number of edges that contain $v$. The minimum degree of a hypergraph $\cH$ is denoted $\delta(\cH) := \min_{v\in V(\cH)} d(v)$. The co-degree of a set of vertices $\{v_1, \ldots, v_\ell\}$ %, denoted $d(v_1, \ldots, v_\ell)$, 
 is the number of edges that contain the set $\{v_1, \ldots, v_\ell\}$.

\begin{definition} 
A {\bf Berge cycle} of length $\ell$ in a hypergraph is a set of $\ell$ distinct vertices $\{v_1, \ldots, v_\ell\}$ and $\ell$ distinct edges $\{e_1, \ldots, e_\ell\}$ such that for every $i\in [\ell]$, $v_i, v_{i+1} \in e_i$ with indices taken modulo $\ell$. The vertices $\{v_1, \ldots, v_\ell\}$ are called the {\bf base vertices} of the Berge cycle.

Furthermore, a {\bf Berge hamiltonian cycle} in  a hypergraph $\cH$ is a Berge cycle whose set of base vertices is $V(\cH)$.  
\end{definition}

%We say a hypergraph is {\em Berge hamiltonian} if it contains a Berge hamiltonian cycle. %Similarly, we say a hypergraph is {\em Berge eulerian} if it contains a Berge cycle that uses all  its edges. 

\begin{definition}Let $\cH = (V(\cH), E(\cH))$ be a hypergraph. The {\bf incidence   graph  } of $\cH$ is the bipartite graph $I(\cH)$ with parts $(X, Y)$ where $X = V(\cH)$, $Y= E(\cH)$ such that for $e \in Y, v \in X,$ $ev\in E(I(\cH))$ if and only if the vertex $v$ is contained in the edge $e$ in $\cH$. 
\end{definition}

If $\cH$ has $n$ vertices, $m$ edges and minimum degree at least $\delta$, then
%is a hypergraph with edge cardinalities at least $r$, then in $I(\cH)$, each vertex of $X$ has at least $r$ neighbors in $Y$. That is, 
 $I(\cH) \in \cG(n,m,\delta)$. Furthermore, if $\{v_1, \ldots, v_\ell\}$ and $\{e_1, \ldots, e_\ell\}$ form a Berge cycle of length $\ell$ in $\cH$, then $v_1 e_1 \ldots v_\ell e_\ell v_1$ is a cycle of length $2\ell$ in $I(\cH)$, and vice versa. 

Using incidence   graphs, we also define 2-connectedness in hypergraphs. 

\begin{definition} A hypergraph is {\bf 2-connected} if its incidence   graph   is  2-connected.
\end{definition}

Recall that Dirac's Theorem states that every $n$-vertex graph with minimum degree at least $n/2$ is hamiltonian. This degree is much less than
one needs to guarantee that an $n$-vertex hypergraph has a 
 Berge hamiltonian cycle.

\begin{const}\label{con3}
Let $V(\cH)=V_1 \cup V_2$ where $|V_1| =\lfloor (n+1)/2 \rfloor$, $|V_2| = \lceil (n+1)/2 \rceil$, $V_1 \cap V_2 = \{v\}$, and  let $E(\cH)$ consist of 
 all sets of size at least 2 contained either  in $V_1$ or in $V_2$. Then $\cH$ has minimum degree $2^{\lfloor (n+1)/2 \rfloor-1} - 1$ but no Berge hamiltonian cycle, since the incidence graph has a cut vertex.
If $\cH'$ is formed by the edges of $\cH$ of size $n/4$, then this $n/4$-uniform hypergraph still has an exponential in $n$ minimum degree.
%That is, for each $v_1 \in V_1 - \{v\}$ and $v_2 \in V_2 - \{v\}$, $d(v_1, v_2) = 0$.
\end{const}

\begin{const}\label{con4}
Let $V(\cH)=V_1 \cup V_2$ where $|V_1| =\lceil (n+2)/2 \rceil$, $|V_2| = \lfloor (n-2)/2 \rfloor$, $V_1 \cap V_2 = \emptyset$, and  let $E(\cH)=E_1\cup E_2$, where $E_1$ is the set of all subsets $A$ of $V(\cH)$ of size $\lceil n/4\rceil$ such that
$|V_1\cap A|=1$ (and $|V_2 \cap A| = \lceil n/4 \rceil - 1$), and $E_2=\{V_1\}$. 
Then $\cH$ has an exponential in $n$  minimum degree, high connectivity and positive codegree of each pair of the vertices. But again, 
$\cH$ has no Berge hamiltonian cycle.
%That is, for each $v_1 \in V_1 - \{v\}$ and $v_2 \in V_2 - \{v\}$, $d(v_1, v_2) = 0$.
\end{const}

Thus without additional restrictions, the bounds on minimum degree, in terms of the number of vertices of a hypergraph, that guarantee a Berge hamiltonian cycle are far from linear.
On the other hand, by translating Theorems~\ref{mainj} and~\ref{jackson2} into the language of hypergraphs, we obtain the following lower bounds on the minimum degree in terms of {\em the number of vertices and edges} guaranteeing existence of hamiltonian cycles.

\begin{thm}\label{mainj2} Fix integers $n,m,\delta$ such that 
\begin{equation}\label{nm}
\mbox{\em $ \delta \geq n$ and $\delta \geq \frac{m+5}{3}$.}
\end{equation}
 If $\cH$ is a 2-connected
$n$-vertex hypergraph with $m$ edges and minimum degree at least $\delta$, then $\cH$  has a hamiltonian Berge cycle. \end{thm}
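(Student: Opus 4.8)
The plan is to deduce Theorem~\ref{mainj2} directly from Theorem~\ref{mainj} by passing to the incidence graph. Given a 2-connected $n$-vertex hypergraph $\cH$ with $m$ edges and minimum degree at least $\delta$, where $\delta,n,m$ satisfy~\eqref{nm}, set $G:=I(\cH)$, with parts $(X,Y)=(V(\cH),E(\cH))$. By the remarks just before the statement, $G\in\cG(n,m,\delta)$; and by the definition of 2-connectedness for hypergraphs, $G$ is 2-connected. (Since a 2-connected graph has minimum degree at least $2$, every $Y$-vertex has at least two neighbors in $X$, so $n=|X|\ge 2$ and $\cG(n,m,\delta)$ is indeed defined.)

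Next I would verify the numerical hypotheses of Theorem~\ref{mainj}, namely $n\le\delta\le m\le 3\delta-5$. The inequality $n\le\delta$ is part of~\eqref{nm}; the inequality $m\le 3\delta-5$ is a restatement of $\delta\ge(m+5)/3$, also from~\eqref{nm}; and $\delta\le m$ holds because any vertex of $\cH$ lies in at least $\delta$ of its edges, so $m=|E(\cH)|\ge\delta$.

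Theorem~\ref{mainj} now gives a cycle $C$ in $G$ of length $2n$ covering $X$. As $G$ is bipartite with parts $X$ and $Y$, the cycle $C$ alternates between the two sides, so it can be written as $C=v_1e_1v_2e_2\cdots v_ne_nv_1$ with $v_1,\dots,v_n\in X$ pairwise distinct and $e_1,\dots,e_n\in Y$ pairwise distinct; since $|X|=n$, we get $\{v_1,\dots,v_n\}=V(\cH)$. For each $i$, with indices modulo $n$, the presence of the edges $v_ie_i$ and $e_iv_{i+1}$ in $G$ means exactly that $v_i,v_{i+1}\in e_i$ in $\cH$. Thus the vertices $\{v_1,\dots,v_n\}$ and the edges $\{e_1,\dots,e_n\}$ constitute a Berge cycle of $\cH$ with base vertex set $V(\cH)$ --- that is, a Berge hamiltonian cycle --- which is exactly the conclusion of Theorem~\ref{mainj2}.

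The real content of the argument is Theorem~\ref{mainj}; the translation above is purely formal, so there is no genuine ``main obstacle'' at this stage. The only things that require any attention are confirming that ``$\cH$ is 2-connected'' unwinds, by definition, to ``$I(\cH)$ is 2-connected'' (so Theorem~\ref{mainj} applies to $G$ verbatim) and checking the elementary inequality $\delta\le m$; and there are no nontrivial small cases, since Theorem~\ref{mainj} already covers every admissible value of $n$.
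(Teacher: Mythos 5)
Your proof is correct and takes exactly the route the paper intends: Theorem~\ref{mainj2} is stated in the paper as a direct translation of Theorem~\ref{mainj} via the incidence graph, with no separate argument given, and your write-up supplies precisely that translation, including the small but worthwhile checks that $n\ge 2$ and $\delta\le m$ so that $\cG(n,m,\delta)$ is defined and Theorem~\ref{mainj} applies.
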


\begin{thm}\label{jackson22} Let $\delta \geq n$ and $\delta\geq (m+1)/2$. If an 
$n$-vertex hypergraph $\cH$ with $m$ edges and minimum degree at least $\delta$  has no hamiltonian Berge cycle,
 then 
the incidence graph $I(\cH)$ is
 either $G_1(n)$ 
in Example~\ref{con1} or $ G_2(a,b)$ for some  $a$ and $b$ with $a + b = n$ in Example~\ref{con2}. 
\end{thm}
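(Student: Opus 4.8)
The plan is to deduce Theorem~\ref{jackson22} directly from Theorem~\ref{jackson2} by passing to the incidence graph. First I would dispose of the degenerate cases: if $m < \delta$ then no hypergraph has minimum degree $\ge \delta$ at all, so the statement is vacuous, and $n \le 1$ is trivial; hence we may assume $n \ge 2$ and $\delta \le m$. Then, as observed in the excerpt, $G := I(\cH)$ lies in $\cG(n,m,\delta)$: indeed $|X| = |V(\cH)| = n$, $|Y| = |E(\cH)| = m$, and every $x \in X$ satisfies $d_G(x) = d_\cH(x) \ge \delta(\cH) \ge \delta$. The hypothesis $\delta \ge (m+1)/2$ rearranges to $m \le 2\delta - 1$, and together with $\delta \ge n$ this is exactly what Theorem~\ref{jackson2} requires of $G$.

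The second step is to translate ``Berge hamiltonian cycle'' into the bipartite language. If $\{v_1,\dots,v_n\}$ and $\{e_1,\dots,e_n\}$ form a Berge hamiltonian cycle of $\cH$, then $v_1 e_1 v_2 e_2 \cdots v_n e_n v_1$ is a cycle of length $2n$ in $I(\cH)$: consecutive vertex--edge pairs are incidences, and all $2n$ vertices listed are distinct since the $v_i$ are distinct members of $X$ and the $e_i$ are distinct members of $Y$. Conversely, any cycle of length $2n$ in the bipartite graph $I(\cH)$ alternates between $X$ and $Y$, hence uses exactly $n$ vertices of $X$; as $|X| = n$ it uses all of $X = V(\cH)$, and reading its $X$-vertices as base vertices and its $Y$-vertices as the associated edges produces a Berge hamiltonian cycle. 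Thus $\cH$ has a Berge hamiltonian cycle if and only if $I(\cH)$ contains a cycle of length $2n$.

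Combining the two steps: if $\cH$ has no Berge hamiltonian cycle, then $G = I(\cH) \in \cG(n,m,\delta)$ contains no cycle of length $2n$, so Theorem~\ref{jackson2} forces $I(\cH) = G_1(n)$ or $I(\cH) = G_2(a,b)$ for some $a,b$ with $a+b=n$, which is precisely the asserted conclusion. (As a sanity check, note that by Theorem~\ref{jackson} this outcome is only possible when $m = 2\delta-1$, i.e.\ when $\delta = (m+1)/2$.)

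Since all of the substance is already contained in Theorem~\ref{jackson2}, I do not expect a genuine obstacle in this argument; the only places that want a little care are the exact dictionary between Berge cycles and $2n$-cycles in the incidence graph just described, and---if one insists on $\cH$ being a simple hypergraph rather than a multihypergraph---checking that $G_1(n)$ and $G_2(a,b)$ are indeed incidence graphs of suitable (multi)hypergraphs, which is a non-issue under the paper's convention that every bipartite graph is the incidence graph of a multihypergraph.
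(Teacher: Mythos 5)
Your argument is exactly the paper's: Theorem~\ref{jackson22} is presented there as the direct hypergraph translation of Theorem~\ref{jackson2} via the incidence-graph dictionary (the paper even states the Berge-cycle/$2n$-cycle correspondence explicitly before the theorem). Your write-up simply spells out that translation carefully, which is correct and complete.
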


Examples~\ref{con1} and~\ref{con2} show the sharpness of Theorem~\ref{jackson22}, and 
Example~\ref{con5}  shows the sharpness of Theorem~\ref{mainj2}.

On the other hand, Condition~(\ref{nm}) in  a 2-connected
$n$-vertex hypergraph $\cH$ with $m$ edges and minimum degree at least $\delta$ implies more than simply a hamiltonian cycle
in~$\cH$. We discuss this in the next section.

\subsection{Pancyclic graphs and hypergraphs}
An $n$-vertex graph is  {\em pancyclic} if it contains a cycle of length $\ell$ for every $3 \leq \ell \leq n$. 
There are many results on pancyclic graphs. For instance, Bondy~\cite{Bondy} proved that every Hamiltonian graph with at least $n^2/4$ edges is either the balanced complete bipartite graph or is pancyclic. Later, Bondy~\cite{Bondy2} made the metaconjecture that every nontrivial condition that  implies that a graph $G$ is hamiltonian also implies that $G$ is pancyclic. See~\cite{pansurvey} for more results on pancyclic graphs.
In this paper, we consider similar  notions for hypergraphs. 

\begin{definition}
A hypergraph $\cH$ is {\bf pancyclic} if it contains a Berge cycle of length $\ell$ for every $\ell \geq 3$. Furthermore, we say that $\cH$ is {\bf super-pancyclic} if for every   $A\subseteq V(\cH)$ with $ |A|\geq 3$, $\cH$ has  a Berge cycle whose set of base vertices is  $A$.
\end{definition}
We can similarly define super-pancyclic bipartite graphs.

\begin{definition} A bipartite graph $G$ with partition $(X,Y)$ is {\bf $X$-super-pancyclic} if for every $X' \subseteq X$ with $|X'| \geq 3$, 
$G$ has a cycle $C$  with $V(C) \cap X = X'$. 
\end{definition}

An interesting result on pancyclic hypergraphs was proved by
Lu and Wang~\cite{LW}:
% proved that nonzero minimum co-degree is enough to imply Berge hamiltonicity in hypergraphs with restrictions on the edge cardinalities and sufficiently many vertices. Furthermore, they show that such hypergraphs are also pancyclic.
\begin{thm}[Lu, Wang~\cite{LW}]
Let $R$ be a finite set of integers. Then there exists some integer $n_0=n_0(R)$ such that for every $n \geq n_0$, each 
  $n$-vertex hypergraph $\cH$ with edge cardinalities in $R$ and minimum co-degree at least 1  is pancyclic. Furthermore, for each set of vertices $A$ with $|A| \geq n_0$, $\cH$ contains a Berge cycle with the set of base vertices $A$.
\end{thm}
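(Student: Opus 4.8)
The plan is to split the statement into two parts and handle each with a greedy/rotation argument. Put $r := \max R$ and discard every edge of size $\le 1$; the co‑degree hypothesis forces a common edge of size $\ge 2$ on each pair, so $r \ge 2$ and nothing is lost. I would prove \emph{(a)}: for each fixed $\ell$ with $3 \le \ell \le n_0$, there is a Berge $\ell$-cycle; and \emph{(b)}: for every $A \subseteq V(\cH)$ with $|A| \ge n_0$, there is a Berge cycle with base set exactly $A$. Part (b) applied with $|A| = \ell$ for $n_0 \le \ell \le n$, together with (a), yields Berge cycles of every length in $[3,n]$ (pancyclicity), and (b) is precisely the ``furthermore'' assertion; so it suffices to prove (a) and (b), choosing the constant $n_0 = n_0(R)$ large at the end. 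Two consequences of the hypotheses are used repeatedly. First, since an edge of size $\le r$ contains at most $\binom r2$ pairs, $\cH$ has at least $\binom n2/\binom r2$ edges and every vertex has degree at least $(n-1)/(r-1)$; more generally every $W \subseteq V(\cH)$ meets at least $\binom{|W|}2/\binom r2$ edges in $\ge 2$ points. Second, the \emph{engine bound}: for $u \in V(\cH)$, $W \subseteq V(\cH)$ and a set $F$ of edges, the number of $w \in W$ for which every edge through $\{u,w\}$ lies in $F$ is at most $(r-1)$ times the number of members of $F$ containing $u$, since each such edge accounts for at most $r-1$ values of $w$.

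For part (a), fix $\ell \le n_0$ and build a Berge $\ell$-cycle greedily, choosing vertices and edges together. Having built a Berge path $v_1 e_1 \cdots e_{i-1} v_i$ on $i < \ell$ vertices with distinct edges, $v_i$ lies in $\ge (n-1)/(r-1)$ edges of which $< n_0$ are used, hence in an unused edge; by the engine bound (with $W = V(\cH)$, $F$ the used edges, $u = v_i$) we may insist this unused edge contains a vertex $v_{i+1}$ off the current path. To close, we need an unused edge through $\{v_\ell, v_1\}$; the only obstruction is that $\{v_\ell, v_1\}$ was forced into a single already-used edge, which we prevent by choosing the initial pair $v_1, v_2$ adaptively among the $\Theta(n^2)$ pairs (and, if needed, performing one rotation). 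Since $\ell \le n_0$ is bounded, only $O_R(1)$ obstructions ever arise while $\Theta(n)$--$\Theta(n^2)$ choices are available at each step, so the construction succeeds.

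For part (b), fix $A$, $k := |A| \ge n_0$; we seek distinct edges $f_1, \dots, f_k$ and a cyclic order $a_1, \dots, a_k$ of $A$ with $\{a_i, a_{i+1}\} \subseteq f_i$ — equivalently a Hamilton cycle of $K_A$ together with an injective choice, for each of its edges, of an edge of $\cH$ from the nonempty list of edges of $\cH$ containing that pair (each edge of $\cH$ lying in $\le \binom r2$ such lists). The plan is a greedy long Berge path reinforced by Pósa-type rotations, plus absorption of a few leftover vertices and a final closing edge:
\begin{enumerate}
\item[(i)] \emph{Reservoir.} Using the density facts, greedily set aside a constant-size $Z \subseteq A$ and, for each $z \in Z$, an ``absorbing gadget'' inside $A$ realising both a short Berge segment through $z$ and a Berge bypass of $z$, all gadgets on pairwise disjoint vertex sets and edge sets (possible since at each step only $O_R(1)$ vertices/edges are blocked while $\Theta(n)$ are available); link them into one absorbing Berge path $P_{\mathrm{abs}}$.
\item[(ii)] \emph{Greedy with rotations.} On the rest of $A$ build a long Berge path with fresh distinct edges: extend the endpoint to a new vertex via a fresh edge whenever the engine bound permits (i.e.\ whenever the number of unplaced vertices exceeds $r-1$ times the number of used edges through the current endpoint). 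When naive extension fails, rotate; a double count — each used edge is an obstruction at $\le r-1$ vertices, and on average a vertex of $A$ lies in $\le r$ used edges — shows that except for the last $O_R(1)$ vertices the ``stuck'' endpoints are few and the rotated endpoints number $\Omega(k)$, so an extension or a closing chord succeeds. Continue until all but $O_R(1)$ vertices of $A$, and one end of $P_{\mathrm{abs}}$, lie on a single Berge path $P$; attach $P_{\mathrm{abs}}$.
\item[(iii)] \emph{Absorb and close.} The uncovered vertices lie in $Z$; absorb each by swapping its bypass for its segment, and replace the segments of the already-covered $z \in Z$ by bypasses. This gives a Berge path with base set exactly $A$; close it with one more fresh edge through its two ends — available since only $k + O_R(1)$ edges have been used in all and, as $n_0$ is large, no relevant list has been exhausted.
\end{enumerate}

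The crux is step (ii): controlling edge reuse so that a fresh representative edge is always available. Naively a single used edge blocks up to $r-1$ continuations, so the plain greedy reaches only $\approx k/r$ vertices; the work is to show that the endpoints at which \emph{every} continuation is blocked are $O_R(1)$ in number (the double-counting of blocked pairs against the at most $k + O_R(1)$ used edges) and that rotations then expand the endpoint set to $\Omega(k)$ (using that $K_A$ is complete, so every internal vertex is a potential pivot whose only obstruction is that the current endpoint lies in many used edges, which is rare). Taking $n_0 = n_0(R)$ sufficiently large relative to $r$ is exactly what supplies the slack for the reservoir in (i), the ``last $O_R(1)$ vertices'' in (ii), and the final closing in (iii); by comparison, part (a) and the density preliminaries are routine.
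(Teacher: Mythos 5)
The paper does not prove this theorem: it is quoted verbatim from Lu and Wang~\cite{LW} and used as motivating context, with no proof supplied in the present manuscript. So there is no in-paper argument to compare your attempt against; I can only assess the attempt on its own terms.

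Your toolkit (co-degree $\ge 1$ giving an engine bound of the form ``each used edge blocks at most $r-1$ continuations,'' greedy Berge-path building, P\'osa-type rotation, and an absorbing reservoir) is the standard one for this kind of result, and part (a), where $\ell$ is bounded by $n_0$ and the number of used edges stays $O_R(1)$, does go through once the closing step is handled with a little more care (for instance by reserving a closing edge through $\{v_1,v_\ell\}$ before building the path, rather than invoking ``one rotation''). The real gap is in step~(ii)/(iii) of part (b). First, the rotation claim is not justified: in the Berge setting a rotation at $v_i$ swaps the representative edge $e_i$ for a new edge $f$, so the set of used edges — and hence the set of blocked continuations — changes with every rotation, and the graph-theoretic P\'osa lemma does not transfer automatically; your double count only controls the \emph{average} number of used edges through a vertex of $A$, which says nothing about the endpoint you are actually stuck at, and you give no argument that rotations produce $\Omega(k)$ endpoints under a non-static adjacency relation. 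Second, the absorption bookkeeping is incoherent as written: the greedy is run on $A\setminus(\text{gadget vertices})$, so any leftover vertices $L$ it fails to place lie in $A\setminus Z$, not in $Z$; the gadgets you built can only toggle vertices of $Z$ in and out, so ``the uncovered vertices lie in $Z$'' is an assertion, not a consequence. To make this work one needs either gadgets that absorb an \emph{arbitrary} small subset of $A$, or a proof that the greedy path can be steered to leave out exactly $Z$; neither is supplied. As it stands this is a plausible outline of an absorption proof, not a proof.
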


%Note that their result essentially also shows super-pancyclicity for sufficiently large subsets of vertices.

%Meanwhile, if the edges of an $n$-vertex hypergraph $\cH$ have large cardinality on average, then requiring just the minimum degree condition $\delta(\cH) \geq n$ not only implies that $\cH$ has a Berge hamiltonian cycle, but also that $\cH$ is super-pancyclic. 

Also, Jackson's Theorem (Theorem~\ref{jackson}) implies the following result on pancyclic hypergraphs.
%that if $m \leq 2\delta-2$, then for each $G \in \cG(n,m,\delta)$, and each $X' \subseteq X$, there exists a cycle $C$ of $G$ with $V(C) \cap X = X'$. Viewing $G$ as the incidence   graph   of some hypergraph $\cH$, we obtain the following equivalent statement.

\begin{thm} [Hypergraph version of Theorem~\ref{jackson}] Suppose $\delta\geq n$ and $ \delta\geq (m+2)/2$. Then every   $n$-vertex hypergraph with $m$ edges and minimum degree  at least $\delta$ is super-pancyclic.
\end{thm}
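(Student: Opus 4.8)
The plan is to deduce this directly from Jackson's theorem (Theorem~\ref{jackson}) by a short reduction carried out in the incidence graph. Fix an $n$-vertex hypergraph $\cH$ with $m$ edges and $\delta(\cH)\ge\delta$, where $\delta\ge n$ and $\delta\ge (m+2)/2$, and fix a set $A\subseteq V(\cH)$ with $|A|=k\ge 3$; we must produce a Berge cycle whose base vertex set is exactly $A$. Since a Berge cycle of $\cH$ with base set $A$ is the same thing as a cycle $C$ in $I(\cH)$ with $V(C)\cap X=A$ (as noted right after the definition of the incidence graph), it suffices to find such a cycle.

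The one useful observation is that we restrict only the $X$-side of $I(\cH)$ and keep the entire $Y$-side. Let $G^\ast := I(\cH)[A\cup E(\cH)]$ be the subgraph of $I(\cH)$ induced on $A$ together with all of $Y=E(\cH)$. Deleting the $X$-vertices outside $A$ does not change the degree of any vertex of $A$, so $d_{G^\ast}(v)=d_{\cH}(v)\ge\delta$ for every $v\in A$, while the $Y$-side still has exactly $m$ vertices. Hence $G^\ast\in\cG(k,m,\delta)$ (here $k\ge 3\ge 2$, and $\delta\le m$ holds automatically since $\delta\le d(v)\le |E(\cH)|$ for any vertex $v$). It remains to verify Jackson's two hypotheses for $G^\ast$: indeed $k=|A|\le n\le\delta$, and $m\le 2\delta-2$ because $\delta\ge (m+2)/2$.

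Theorem~\ref{jackson} then gives a cycle of length $2k$ in $G^\ast$, that is, a cycle covering the $k$-element $X$-side $A$. Writing this cycle as $v_1e_1v_2e_2\cdots v_ke_kv_1$, the $v_i$ run over all of $A$, the $e_i$ are $k$ distinct edges of $\cH$, and $v_i,v_{i+1}\in e_i$ for each $i$ (indices modulo $k$); this is precisely a Berge cycle of $\cH$ with base vertex set $A$. As $A$ was an arbitrary subset of $V(\cH)$ of size at least $3$, the hypergraph $\cH$ is super-pancyclic. There is essentially no obstacle here beyond noticing that one must retain the full edge set when passing to the induced subgraph, so that the minimum-degree condition survives; the numerical conditions $\delta\ge n$ and $\delta\ge (m+2)/2$ are exactly what is needed to invoke Theorem~\ref{jackson} with parameters $(k,m,\delta)$.
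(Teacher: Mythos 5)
Your proof is correct and is exactly the reduction the paper has in mind when it states this theorem as a direct consequence of Theorem~\ref{jackson} without writing out a proof: pass to the subgraph of $I(\cH)$ induced on $A\cup E(\cH)$, observe it lies in $\cG(|A|,m,\delta)$ with $|A|\le n\le\delta$ and $m\le 2\delta-2$, and invoke Jackson's theorem. You have supplied the short verification the paper leaves implicit, and nothing is missing.
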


One of the main goals of this paper is to extend this result to more general hypergraphs.

%\section{Conditions for super-pancyclicity, new results}

If  $A$ is a subset of the part $X$ in a graph $G \in \cG(n,m,\delta)$ and $C$ is a cycle in $G$ with $V(C)\cap X=A$ and $V(C)\cap Y=B$, then 
$G[A\cup B]$ is $2$-connected. Thus
 every $X$-super-pancyclic bipartite graph satisfies:
 \begin{equation}\label{lll}
 \parbox{14cm}{\em
 For each $A\subseteq X$ with $|A|\geq 3$, there is $B\subseteq Y$ with $|B|\geq |A|$ such that $G[A\cup B]$ is $2$-connected. }
 \end{equation}

We will show  that this necessary  condition for $G$ to be $X$-super-pancyclic is also sufficient when $m\leq 3\delta-5$.

\begin{thm}\label{mainpan} Let $ \delta \geq n$ and $m\leq 3\delta-5$. If $G \in \cG(n,m,\delta)$ satisfies~(\ref{lll}),
then $G$ is $X$-super-pancyclic.
\end{thm}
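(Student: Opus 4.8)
## Proof Proposal for Theorem~\ref{mainpan}

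\textbf{Proof proposal.}
The plan is to fix $A\subseteq X$ with $a:=|A|\ge 3$, produce a cycle $C\subseteq G$ with $V(C)\cap X=A$, and argue by induction on $a$. First a harmless reduction: let $Y_A=\{y\in Y:\ N_G(y)\cap A\neq\emptyset\}$ and replace $G$ by $G_A:=G[A\cup Y_A]$. Since a $2$-connected graph has minimum degree at least $2$, every $B''$ witnessing~(\ref{lll}) for a subset $A''\subseteq A$ lies in $Y_A$, so $G_A$ still satisfies~(\ref{lll}) for all subsets of $A$; also $d_{G_A}(x)=d_G(x)\ge\delta$ for $x\in A$ and $|Y_A|\le m\le 3\delta-5$; and a cycle of $G_A$ covering $A$ is the desired cycle of $G$. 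So I may assume $X=A$, hence $n=a$ and I only need a cycle of length $2n$. Put $B^{*}=\{y\in Y:\ d_G(y)\ge 2\}$ and $G^{*}=G[X\cup B^{*}]$, i.e.\ $G^{*}$ is $G$ with its pendant and isolated $Y$-vertices removed. By~(\ref{lll}) for $A=X$ there is $B_0\subseteq Y$ with $|B_0|\ge n$ and $G[X\cup B_0]$ $2$-connected; necessarily $B_0\subseteq B^{*}$, and adding the vertices of $B^{*}\setminus B_0$ one by one (each has at least two neighbours in $X$) preserves $2$-connectivity, so $G^{*}$ is $2$-connected. Set $\delta^{*}:=\min_{x\in X}d_{G^{*}}(x)\ge 2$ and $m^{*}:=|B^{*}|\le m$.

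\emph{Main case: $\delta^{*}\ge n$ and $m^{*}\le 3\delta^{*}-5$.} Then $G^{*}\in\cG(n,m^{*},\delta^{*})$ is $2$-connected with $n\le\delta^{*}\le m^{*}\le 3\delta^{*}-5$, so Theorem~\ref{mainj} gives a cycle of length $2n$ in $G^{*}$, as required. This already handles the case that $G$ has no pendant $Y$-vertices (then $G=G^{*}$, $\delta^{*}=\delta$, $m^{*}=m$), and the case $m\le 2\delta-1$ as well: there one applies Theorem~\ref{jackson2} to $G$ directly (it lies in $\cG(n,m',\delta')$ with $\delta'=\min_x d_G(x)\ge\delta\ge n$ and $m'=|Y|\le 2\delta-1\le 2\delta'-1$), so $G$ has the desired $2n$-cycle unless $G$ is one of the graphs of Examples~\ref{con1}--\ref{con2}; but each of those has a cut vertex, hence no $2$-connected induced subgraph covering $X$, contradicting~(\ref{lll}).

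\emph{Remaining case, and the main difficulty.} Suppose $\delta^{*}<n$ or $m^{*}>3\delta^{*}-5$. Writing $p(x)=|\{y\in Y:\ N_G(y)=\{x\}\}|$, so $m=m^{*}+\sum_{x}p(x)$, a short computation from $m\le 3\delta-5$ (using $\delta^{*}=d_G(x_1)-p(x_1)$ for the vertex $x_1$ attaining $\delta^{*}$) shows that some vertex $x_0$ has $p(x_0)\ge 1$; moreover, after first deleting as many pendant $Y$-vertices as possible while keeping the hypotheses valid --- a shifting step that forces every vertex incident to a pendant $Y$-vertex to be a minimum-degree vertex of $G$ --- I may take $x_0$ to attain $\delta^{*}$. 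Now induct: by~(\ref{lll}) for $X\setminus\{x_0\}$ (of size $n-1\ge 3$ once $n\ge 4$) and the induction hypothesis, $G$ --- in fact $G^{*}$ --- has a cycle $C'$ covering $X\setminus\{x_0\}$. Since $G^{*}$ is $2$-connected and $X\setminus\{x_0\}\subseteq V(C')$, there are two internally disjoint $x_0$--$V(C')$ paths, and as $x_0$ is the only vertex of $X$ off $C'$ these paths have length at most $2$, ending at distinct $u_1,u_2\in X\setminus\{x_0\}$ through distinct $z_1,z_2\in B^{*}\setminus V(C')$. If $u_1$ and $u_2$ are consecutive on $C'$, joined there by $w$, then replacing $w$ by the path $z_1,x_0,z_2$ gives a cycle of length $2n$ covering $X$; the base case $n=3$ (where the Main case fails only if $\delta^{*}=2$) is handled directly in the $2$-connected bipartite graph $G^{*}$. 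The crux is the case where $u_1,u_2$ are \emph{not} consecutive on $C'$. I expect to deal with this by choosing $C'$, the pair $\{u_1,u_2\}$, and the two legs so as to minimise the distance between $u_1$ and $u_2$ along $C'$, and then to show --- using the $2$-connectivity of $G^{*}$ together with the sizeable degrees the shifting step forces on the vertices of $X\setminus\{x_0\}$ --- that this distance must be $1$, so that the local replacement applies. Making this exchange argument go through, i.e.\ guaranteeing that the inductively produced cycle can be taken so that $x_0$'s two entry vertices are adjacent along it, is the main obstacle.
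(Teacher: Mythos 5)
Your proposal takes a genuinely different route from the paper and, as you flag yourself, contains a real gap at the crux. The paper also inducts on $|X'|$, but structures the step quite differently: it fixes $X'$, supposes no cycle covers exactly $X'$, uses the induction hypothesis for \emph{every} $x\in X'$ to get a cycle $C_x$ covering $X'-x$, and then selects the pair $(C_x,x)$ maximizing $|N(x)\cap V(C_x)|$. With this choice $(C_x,x)$ is (essentially) a tight pair in $G_{X'}$ in the sense of Section~2, so the extension problem splits into cases $t=0$, $t=1$, $2\le t\le k-2$, $t=k-1$, dispatched respectively via Claim~\ref{codegree}, a direct rotation, Lemma~\ref{lemT}, and Lemma~\ref{lemT2}. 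That tight-pair machinery is exactly what bridges the gap you leave open: you reach the point of having a cycle $C'$ covering $X\setminus\{x_0\}$ and two legs from $x_0$ landing at $u_1,u_2\in V(C')$, and you need either $u_1,u_2$ consecutive on $C'$ or a rerouting, but you give no argument that your minimizing choice forces consecutiveness. This is not a finishing detail --- the entire content of Lemmas~\ref{crossing}, \ref{lemT}, and \ref{lemT2} (and the crossing/non-crossing analysis in the proof of Theorem~\ref{mainj}) is the degree-counting argument showing that when no such rerouting exists, $|Y|\ge 3\delta-4$, contradicting $m\le 3\delta-5$.

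A secondary issue: your ``shifting step'' (repeatedly deleting pendant $Y$-vertices ``while keeping the hypotheses valid'') is not well defined. Removing pendant edges can only lower degrees in $X$, so the hypothesis $d(x)\ge\delta\ge n$ is generally destroyed; it is unclear what invariant you maintain or why the vertex $x_0$ incident to a pendant can be taken to attain $\delta^*$ \emph{and} still satisfy the hypotheses on the reduced graph. By contrast, your ``main case'' --- stripping $Y$-vertices of degree $\le 1$, observing that $G^*$ inherits $2$-connectivity from the $B_0$ guaranteed by~(\ref{lll}), and applying Theorem~\ref{mainj} when $\delta^*\ge n$ and $m^*\le 3\delta^*-5$, or Theorem~\ref{jackson2} when $m\le 2\delta-1$ (noting both extremal graphs have cut vertices and so violate~(\ref{lll})) --- is correct and clean, but it is not where the difficulty of Theorem~\ref{mainpan} lies.
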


In terms of hypergraphs our result is as follows
%Note that a Berge cycle of $\cH(A,B)$ can be {\em lifted} to a Berge cycle of $\cH$ with the same base vertices by simply ``unshrinking'' the edges in the cycle.

\begin{cor}[Hypergraph version of Theorem~\ref{mainpan}]\label{mainpan2} Let $ \delta \geq n$ and $m\leq 3\delta-5$. If the incidence graph of
 an $n$-vertex hypergraph $\cH$  with $m$ edges and minimum degree $\delta(\cH)$ satisfies~(\ref{lll}),
 then $\cH$ is super-pancyclic.
\end{cor}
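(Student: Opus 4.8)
The plan is to obtain Corollary~\ref{mainpan2} as a direct translation of Theorem~\ref{mainpan} through the incidence graph, using the dictionary between Berge cycles of $\cH$ and cycles of $I(\cH)$ that is recorded earlier in the paper. So the proof will contain no new combinatorial argument; all of the substance is in Theorem~\ref{mainpan}.

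Concretely, I would proceed in three short steps. \emph{Step 1 (set up the incidence graph).} Put $G := I(\cH)$ with parts $(X,Y)$, where $X = V(\cH)$ and $Y = E(\cH)$, and set $\delta := \delta(\cH)$. Since $\cH$ has $n$ vertices, $m$ edges and minimum degree $\delta$, every $x \in X$ has $d_G(x) = d_\cH(x) \geq \delta$; moreover $\delta \leq m$ because a vertex of $\cH$ lies in at most $m$ edges. Hence $G \in \cG(n, m, \delta)$. The hypotheses $\delta(\cH) \geq n$ and $m \leq 3\delta(\cH) - 5$ are exactly the numerical hypotheses $\delta \geq n$, $m \leq 3\delta - 5$ of Theorem~\ref{mainpan}, and by assumption $G = I(\cH)$ satisfies condition~(\ref{lll}). \emph{Step 2 (apply Theorem~\ref{mainpan}).} Theorem~\ref{mainpan} then yields that $G$ is $X$-super-pancyclic. \emph{Step 3 (translate back).} Fix any $A \subseteq V(\cH) = X$ with $|A| \geq 3$. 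By $X$-super-pancyclicity there is a cycle $C$ in $G$ with $V(C) \cap X = A$. As $G$ is bipartite, $C$ alternates between $X$ and $Y$, so it reads $v_1 e_1 v_2 e_2 \cdots v_\ell e_\ell v_1$ with $\{v_1, \dots, v_\ell\} = A$ (thus $\ell = |A| \geq 3$), with $e_1, \dots, e_\ell$ pairwise distinct elements of $Y = E(\cH)$, and with $v_i, v_{i+1} \in e_i$ for all $i$ (indices mod $\ell$) by the definition of the incidence graph. Hence $\{v_1, \dots, v_\ell\}$ and $\{e_1, \dots, e_\ell\}$ form a Berge cycle of $\cH$ with base-vertex set $A$. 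Since $A$ was arbitrary, $\cH$ is super-pancyclic.

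There is essentially no obstacle in the corollary itself: the only things to check are that the parameters $(n, m, \delta(\cH))$ of $\cH$ transfer correctly to $(n, m, \delta)$ of its incidence graph (including the innocuous inequality $\delta \leq m$ needed for membership in $\cG(n,m,\delta)$), that condition~(\ref{lll}) on $I(\cH)$ is precisely the hypothesis of Theorem~\ref{mainpan} (and already encodes $2$-connectedness of the relevant subgraphs, matching the definition of a $2$-connected hypergraph), and that $X$-super-pancyclicity of $I(\cH)$ pulls back to super-pancyclicity of $\cH$ via the length-$\ell$ Berge cycle / length-$2\ell$ cycle correspondence — all spelled out in the excerpt. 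The real work, namely proving Theorem~\ref{mainpan} (that condition~(\ref{lll}) together with $m \leq 3\delta-5$ forces $G$ to contain, for each prescribed $A \subseteq X$ with $|A|\geq 3$, a cycle meeting $X$ in exactly $A$), lies entirely outside this corollary and presumably rests on the long-cycle machinery behind Theorems~\ref{jackson} and~\ref{mainj}.
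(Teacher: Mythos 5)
Your proposal is correct and follows essentially the same route as the paper: form the incidence graph $I(\cH)$, observe that it lies in $\cG(n,m,\delta)$ and that the hypotheses of Theorem~\ref{mainpan} (including condition~(\ref{lll})) transfer directly, apply Theorem~\ref{mainpan} to get $X$-super-pancyclicity, and translate back via the Berge-cycle/cycle correspondence. Your Step~1 is in fact slightly more careful than the paper's write-up, which substitutes $n$ for $\delta(\cH)$ and states $|E(\cH)|\leq 3n-5$; working with $\delta:=\delta(\cH)$ as you do is the cleaner way to match the hypothesis $m\leq 3\delta-5$.
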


It could be that the necessary condition~(\ref{lll}) for  $\cH$ to be super-pancyclic is  sufficient not only for $m\leq 3\delta-5$.
It would be interesting to find the
 range of $m$ for which~(\ref{lll}) yields the conclusion of
Corollary~\ref{mainpan2}.

\medskip

We present the main proofs in the language of bipartite graphs. In Section 2, we introduce
the notion of a tight pair and prove some properties of such pairs.
 %several claims and lemmas which will later be used in the proofs of these theorems. 
 Using these properties  we prove Theorems~\ref{mainj} and~\ref{jackson2} in Section~3. In Section~4, we prove Theorem~\ref{mainpan} and the reduction to Corollary~\ref{mainpan2}.

\section{Tight pairs and their properties} %{Lemmas for bipartite graphs without long cycles}

%In this section, $G \in \cG(n,m,\delta)$. 

\begin{definition} Let $G \in \cG(n,m,\delta)$.  A {\bf tight pair} in $G$ is a pair $(C,x)$ where $C$ is a longest cycle in $G$ and $x\in X-C$ 
is such that $|N(x)\cap v(C)|$  is maximum over all pairs $(C',x')$  where $C'$ is a longest cycle in $G$ and $x'\in X-V(C')$.
\end{definition}

By definition, a graph $G \in \cG(n,m,\delta)$ has a tight pair if and only if  $G$ does not contain a cycle of length $2n$. In this section, $G$
is always a graph in $\cG(n,m,\delta)$ and $(C,x)$ is a tight pair where 
 $C = y_1x_1 \ldots y_\ell x_\ell y_1$. % is a longest cycle of $G$. 

\begin{claim}\label{neighbor} %Suppose $|V(C)| < 2m$ and $x_m \in X - V(C)$.
  If $y_i \in N_C(x)$, then $N(x_i) - V(C)$ and $N(x)-V(C)$ are disjoint.
\end{claim}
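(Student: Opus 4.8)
The plan is the standard longest-cycle ``rotation'' argument, carried out by contradiction. Suppose the conclusion fails, so there is a vertex $y\in\bigl(N(x_i)\cap N(x)\bigr)\setminus V(C)$. Since $x$ and $x_i$ both lie in $X$, every common neighbour of theirs lies in $Y$; in particular $y\in Y\setminus V(C)$. The idea is to use this single extra vertex $y$, together with the chord $xy_i$ guaranteed by $y_i\in N_C(x)$, to splice out a cycle strictly longer than $C$.

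Concretely, I would delete from $C$ the edge joining $x_i$ and $y_i$ (the one used by $C$). Because $C$ is the closed walk $y_1x_1y_2x_2\cdots y_\ell x_\ell y_1$, removing this one edge leaves a path $P$ whose endpoints are exactly $x_i$ and $y_i$ and with $V(P)=V(C)$. Now prepend $x$ along the edge $xy_i$: since $x\notin V(C)$, this gives a path from $x$ to $x_i$ with vertex set $V(C)\cup\{x\}$. Finally append $y$ using the edges $x_iy$ and $yx$; since $y\notin V(C)\cup\{x\}$, the walk $x\,y_i\,P\,x_i\,y\,x$ is a genuine cycle, on vertex set $V(C)\cup\{x,y\}$, hence of length $|V(C)|+2$. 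This contradicts the maximality of $C$ (which is part of the definition of a tight pair), so no such $y$ exists and the claim follows.

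I do not expect a real obstacle here: the only things needing care are the routine bookkeeping that deleting $x_iy_i$ produces a path with the stated endpoints and full vertex set (immediate from the cyclic listing of $C$), and checking that the newly introduced vertices do not collide with vertices already used — which holds because $x,y\notin V(C)$ and $x\neq y$ as they lie in opposite parts. Note the argument uses only that $C$ is a longest cycle and $x\in X\setminus V(C)$; the extra extremality in the definition of a tight pair is not needed for this particular claim. The one point to keep straight is the orientation convention, namely that $x_i$ is the \emph{successor} of $y_i$ along $C$; the analogous statement for the predecessor $x_{i-1}$ would follow by the mirror-image argument (delete the edge $x_{i-1}y_i$ instead).
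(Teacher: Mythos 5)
Your argument is correct and is essentially the same rotation/splicing construction the paper uses: you replace the single edge $y_ix_i$ of $C$ by the path $y_i\,x\,y\,x_i$ to obtain a cycle on $V(C)\cup\{x,y\}$, contradicting that $C$ is longest. Your remark that only the maximality of $C$ (not the full tightness of the pair) is used, and that the predecessor case follows by the mirror-image argument, also matches the paper, which records that observation immediately after the claim.
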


\begin{proof}
Suppose $y' \in (N(x_i) \cap N(x)) - V(C)$.  Then the cycle
 \[y_1 x_1 \ldots y_i x y' x_i y_{i+1} x_{i+1} \ldots y_\ell x_\ell y_1\] contains $2(\ell + 1)$ vertices, contradicting the choice of $C$.
\end{proof}

Similarly, by reorienting the cycle, we also obtain $N(x_{i-1}) - V(C)$ and $N(x)-V(C)$ are disjoint.

\begin{claim}\label{neighbor3} %Suppose $|V(C)| < 2m$ and $x_m \in X - V(C)$.
  If $y_i, y_j \in N_C(x)$, then $N(x_{i})-V(C)$ and $N(x_j) - V(C)$ are disjoint.
\end{claim}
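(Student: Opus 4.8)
The plan is to argue by contradiction, in exactly the spirit of the proof of Claim~\ref{neighbor}: assume there is a vertex $y' \in (N(x_i)\cap N(x_j)) - V(C)$ and use it, together with $x$, to splice together a cycle strictly longer than $C$, contradicting the maximality of $C$. First I would fix notation by assuming without loss of generality that $i<j$ (relabeling the indices cyclically if needed). The cycle $C$ breaks into two arcs between $y_i$ and $y_j$: the arc $P_1 = y_i\,x_i\,y_{i+1}\,x_{i+1}\cdots x_{j-1}\,y_j$ and the complementary arc $P_2 = y_j\,x_j\,y_{j+1}\cdots x_{i-1}\,y_i$. These two arcs cover $V(C)$ and meet exactly in $\{y_i,y_j\}$. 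The reason for choosing $y'$ adjacent to $both$ $x_i$ and $x_j$ is that $x_i$ is the neighbor of $y_i$ lying inside $P_1$, while $x_j$ is the neighbor of $y_j$ lying inside $P_2$, so $y'$ can serve as a bridge linking the two arcs; and since $x$ is adjacent to both $y_i$ and $y_j$, the vertex $x$ can be used to close everything up.

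Concretely, the cycle I would exhibit is
$$C' = x\, y_j\, x_{j-1}\, y_{j-1}\, \cdots\, x_{i+1}\, y_{i+1}\, x_i\, y'\, x_j\, y_{j+1}\, x_{j+1}\, \cdots\, x_{i-1}\, y_i\, x,$$
that is: traverse $P_1$ backwards from $y_j$ down to $x_i$, jump to $y'$, jump to $x_j$, traverse $P_2$ forwards from $x_j$ to $y_i$, and finally close via the edges $y_i x$ and $x y_j$. Each consecutive pair along this list is either an edge of $C$, one of the two assumed edges $x_i y'$ and $y' x_j$, or one of the two edges $y_i x$ and $x y_j$ (these last two exist because $y_i, y_j \in N_C(x)$). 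Its vertex set is $(P_1 \setminus \{y_i\}) \cup \{y'\} \cup (P_2 \setminus \{y_j\}) \cup \{x\} = V(C) \cup \{x, y'\}$, so $|V(C')| = |V(C)| + 2 = 2\ell + 2$, contradicting the choice of $C$ as a longest cycle.

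The only points needing care are routine bookkeeping: verifying that the displayed sequence really is a closed walk through distinct vertices (it is, since $x, y' \notin V(C)$, since $x \in X$ while $y' \in Y$ forces $x \ne y'$, and since $P_1$ and $P_2$ overlap only in $\{y_i, y_j\}$, one of which is deleted from each side), checking that the degenerate cases $j = i+1$ and $\{i,j\} = \{1,\ell\}$ cause no problem (the formula still parses correctly with indices taken mod $\ell$), and confirming each consecutive pair is indeed an edge. I do not expect any genuine obstacle: the argument is a direct extension of Claim~\ref{neighbor}, the one new ingredient being that we route the detour through both $x_j$ and $y'$ rather than making a single local exchange.
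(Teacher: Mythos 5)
Your proposal is correct and constructs exactly the same augmented cycle as the paper's proof (the paper starts its traversal at $y_1$ rather than at $x$, but the two cycles are identical as vertex sequences up to rotation): enter via $y_i x y_j$, traverse one arc backwards to $x_i$, bridge through $y'$ to $x_j$, and return along the other arc to $y_i$. The reasoning and the contradiction with the maximality of $C$ match the paper.
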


\begin{proof}
Suppose $i < j $ and $y' \in (N(x_i) \cap N(x_j)) - V(C)$.  Then the cycle 
 \[y_1 x_1 \ldots y_i x y_j x_{j-1} y_{j-1} \ldots x_i y' x_j y_{k+1} \ldots y_\ell x_\ell y_1\] contains more vertices than $C$, a contradiction.
\end{proof}

%Let $G$ be a bipartite graph with partition $(X,Y)$, whereand let $C:=y_1 x_1 \ldots y_\ell x_\ell y_1$ be a cycle of $G$ of length $2\ell$.  
 For $1 \leq i,j \leq \ell$,  let $C[i,j]$ denote the clockwise path segment of $C$ from $y_i$ to $y_j$ (where we take indices of the path modulo $\ell$ if $i > j$).

\begin{definition}
For $1 \leq i < j \leq \ell$ we say that vertices $x_i$ and $x_j$ are {\bf crossing} in $C$ if there exists indices ${i'}, {j'} \in [\ell]$ such that $y_{i'} \in N(x_i)$, $y_{j'} \in N(x_j$), and either
\begin{itemize}
\item $i' = j'+1$ with $i+1 \leq j' \leq j-1$ (so $y_{i'}, y_{j'} \in V(C[i+1, j])$, or
\item $j'=i'+1 \mod \ell$ and $y_{i'}, y_{j'} \in V(C[j+1, i])$.
\end{itemize}
\end{definition}
Note that we do not consider $\{i', j'\} = \{i,{i+1}\}$ or $\{i', j'\} = \{j, {j+1}\}$. 

\begin{lem}\label{crossing} Let $1\leq i<j\leq \ell$. % $x_i, x_j\in V(C)$ be two vertices in $C$ with $i < j$. 
 If $x_i$ and $x_j$ are not crossing, then \[|N(x_i) \cap V(C)| + |N(x_j) \cap V(C)| \leq |V(C) \cap Y| +2.\]
\end{lem}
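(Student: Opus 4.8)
The plan is to cut the $Y$-part of $C$ into the two arcs determined by the positions of $x_i$ and $x_j$ and to bound the contribution of each arc to $|N(x_i)\cap V(C)|+|N(x_j)\cap V(C)|$ separately. Let $A_1:=\{y_{i+1},\ldots,y_j\}$ be the set of $Y$-vertices on the segment $C[i+1,j]$ and $A_2:=\{y_{j+1},\ldots,y_\ell,y_1,\ldots,y_i\}$ the set of $Y$-vertices on the segment $C[j+1,i]$; since $i<j$ these two sets partition $V(C)\cap Y$, so $|A_1|+|A_2|=|V(C)\cap Y|$. As every neighbor of an $X$-vertex lies in $Y$, we have $N(x_i)\cap V(C)=(N(x_i)\cap A_1)\cup(N(x_i)\cap A_2)$, a disjoint union, and likewise for $x_j$; hence it suffices to prove
\[
|N(x_i)\cap A_1|+|N(x_j)\cap A_1|\leq |A_1|+1
\qquad\text{and}\qquad
|N(x_i)\cap A_2|+|N(x_j)\cap A_2|\leq |A_2|+1,
\]
and then add the two inequalities.

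I will prove the first inequality; the second is symmetric. Since $C$ runs through $y_ix_iy_{i+1}$ and $y_jx_jy_{j+1}$, we have $y_{i+1}\in N(x_i)\cap A_1$ and $y_j\in N(x_j)\cap A_1$. Consider the \emph{shift} $S:=\{\,y_{a-1}\colon y_a\in N(x_i)\cap A_1,\ a\neq i+1\,\}$. The indices $a$ with $y_a\in N(x_i)\cap A_1$ all lie in $[i+1,j]$, so exactly the index $i+1$ is removed, and for the remaining indices $a\in[i+2,j]$ the vertices $y_{a-1}$ are distinct and lie in $A_1$; therefore $S\subseteq A_1$ and $|S|=|N(x_i)\cap A_1|-1$. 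The key point is that $S$ and $N(x_j)\cap A_1$ are disjoint: a common vertex $y_b$ would satisfy $b\in[i+1,j-1]$, $y_b\in N(x_j)$, and $y_{b+1}\in N(x_i)$, so the indices $(i',j')=(b+1,b)$ would witness that $x_i$ and $x_j$ are crossing in the first sense of the definition (and $\{b,b+1\}\neq\{i,i+1\},\{j,j+1\}$ because $i+1\leq b\leq j-1$), contradicting the hypothesis. Hence $|N(x_j)\cap A_1|+|S|\leq|A_1|$, which rearranges to the first inequality.

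For the second inequality we run the same argument with the roles of $i$ and $j$ interchanged: now $y_i\in N(x_i)\cap A_2$ and $y_{j+1}\in N(x_j)\cap A_2$, we shift $N(x_j)\cap A_2$ one step backwards along $A_2$ (discarding the vertex $y_{j+1}$, whose predecessor $y_j$ is not in $A_2$), and a common vertex of this shifted set with $N(x_i)\cap A_2$ produces indices $(i',j')$ witnessing the second case of the crossing definition. Summing the two bounds yields
\[
|N(x_i)\cap V(C)|+|N(x_j)\cap V(C)|\leq (|A_1|+1)+(|A_2|+1)=|V(C)\cap Y|+2,
\]
as claimed. The one thing to be careful about — and the only place where the non-crossing hypothesis enters — is the disjointness of the shifted neighborhood from the other neighborhood on each arc; everything else is bookkeeping, with the only mild nuisance being that the indices along the arc $A_2$ wrap around modulo $\ell$.
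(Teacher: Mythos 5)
Your proof is correct and follows essentially the same strategy as the paper: split $V(C)\cap Y$ into the two arcs determined by $x_i$ and $x_j$, then show by a shift/pigeonhole argument that exceeding $|A_k|+1$ on either arc forces a crossing pair. The paper packages the shift as ``$I_1$ (resp.\ $I_2$) would be nonempty by pigeonhole,'' which is the same disjointness observation you make explicitly.
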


\begin{proof}Let $P_1 = y_{i+1}x_{i+1} \ldots y_j$. 
Define \[I_1:= \{k \in \{i+1, \ldots, j-1\}: y_k\in N(x_j), y_{k+1} \in N(x_i)\}.\] By pigeonhole principle, if $|N(x_i) \cap V(P_1)| + |N(x_j) \cap V(P_1)| \geq |V(P_1) \cap Y| + 2$ then $I_1$ is nonempty, which implies that $x_i, x_j$ are crossing in $C$, a contradiction.

Similarly let $P_2=y_{j+1}x_{j+1} \ldots y_\ell x_\ell y_1 \ldots x_{i-1}y_i$ and \[I_2:=\{k \in \{j+1, \ldots, \ell\} \cup \{1, \ldots, {i-1}\}: y_k \in N(x_i), y_{k+1} \in N(x_j)\}.\]

We obtain $|N(x_i) \cap V(P_2)| + |N(x_j) \cap V(P_2)| \leq |V(P_2)  \cap Y| + 1$, otherwise $x_i, x_j$ are crossing.

Hence 
\begin{eqnarray*}
&& |N(x_i) \cap V(C)| + |N(x_j) \cap V(C)|\\
&\leq & |N(x_i) \cap V(P_1)| + |N(x_j) \cap V(P_1)| 
+  |N(x_i) \cap V(P_2)| + |N(x_j) \cap V(P_2)|\\
& \leq & |V(P_1)  \cap Y|+1  + |V(P_2) \cap Y| +1 
\leq  |V(C) \cap Y| + 2.
\end{eqnarray*}
\end{proof}

\begin{lem}\label{lemT} %Suppose $(C, x )$ is a tight pair of $G \in \cG(n,m,\delta)$. 
 If $2 \leq |N(x ) \cap V(C)| < |Y\cap V(C)|$ and $n=|X|\leq \delta$, then $m = |Y| \geq 3\delta-4$. 
\end{lem}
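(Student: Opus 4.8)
The hypothesis says $(C,x)$ is a tight pair with $t := |N(x)\cap V(C)| \ge 2$ strictly less than $|Y\cap V(C)| = \ell$, and $n \le \delta$. I want to produce roughly $3\delta - 4$ distinct edges, i.e.\ distinct vertices of $Y$. The natural place to look is the neighborhoods outside $C$ of the vertices $x_i$ with $y_i \in N_C(x)$, together with $Y\cap V(C)$ itself and $N(x)$. The plan is to set $Z := V(C)\cap Y$, so $|Z| = \ell$, and to count $|Y| = |Z| + |Y\setminus Z|$, where $Y\setminus Z$ contains all the ``off-cycle'' neighbors we can find.

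First I would fix the $t$ indices $i$ with $y_i \in N_C(x)$, call them $i_1 < \dots < i_t$. By Claim~\ref{neighbor3}, the sets $N(x_{i_r}) - V(C)$ are pairwise disjoint; by Claim~\ref{neighbor}, each is also disjoint from $N(x) - V(C)$. Since $x \notin C$ and $x$ is not adjacent to all of $Z$ (because $t < \ell$), the set $N(x) - V(C)$ is nonempty only if $x$ has an off-cycle neighbor — but actually every $x_{i_r}$ has $d(x_{i_r}) \ge \delta$, and at most $\ell$ of those neighbors lie in $Z$, so $|N(x_{i_r}) - V(C)| \ge \delta - |N(x_{i_r})\cap Z|$. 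Summing over $r$ and using disjointness, the number of off-cycle vertices that arise this way is at least $\sum_r \bigl(\delta - |N(x_{i_r})\cap Z|\bigr) = t\delta - \sum_r |N(x_{i_r})\cap Z|$. So I need an upper bound on $\sum_r |N(x_{i_r})\cap Z|$, i.e.\ the vertices $x_{i_r}$ cannot have too many neighbors on the cycle.

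This is where Lemma~\ref{crossing} should enter, and I expect it to be the main obstacle. I would argue that no two of the $x_{i_r}$ are crossing in $C$: if $x_{i_r}$ and $x_{i_s}$ were crossing, one should be able to splice the chord structure at $y_{i_r}, y_{i_s}$ (both neighbors of $x$) together with the crossing edges to produce a cycle longer than $C$ — this is the same rerouting idea as in Claims~\ref{neighbor}--\ref{neighbor3}, and the precise non-inclusion clause in the definition of ``crossing'' (excluding $\{i',j'\}=\{i,i+1\}$ etc.) is exactly what makes the reroute legal. Granting that, Lemma~\ref{crossing} gives $|N(x_{i_r})\cap V(C)| + |N(x_{i_s})\cap V(C)| \le \ell + 2$ for each pair, and one must also account for the fact that $|N(x_{i_r})\cap V(C)|$ counts $Y$-vertices only, so $|N(x_{i_r})\cap V(C)| = |N(x_{i_r})\cap Z|$. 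Averaging these pairwise bounds (or using a cleaner direct argument for $t=2$ versus $t\ge 3$) yields $\sum_r |N(x_{i_r})\cap Z| \le \tfrac{t}{2}(\ell+2)$ when $t \ge 2$, but I suspect the sharp bound needed is closer to $\sum_r |N(x_{i_r})\cap Z| \le \ell + t$, obtained by noting the $t$ off-cycle ``landing spots'' for $x$'s rerouting must be avoided on each $x_{i_r}$.

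Finally I would assemble the count: $|Y| \ge |Z| + \sum_r |N(x_{i_r}) - V(C)| + |N(x)-V(C)| \ge \ell + \bigl(t\delta - \sum_r|N(x_{i_r})\cap Z|\bigr) + 0$. Using $\ell \le n \le \delta$ is the wrong direction, so instead I would keep $\ell$ and substitute the bound on $\sum_r|N(x_{i_r})\cap Z|$; the worst case is $t = 2$, where I get $|Y| \ge \ell + 2\delta - |N(x_{i_1})\cap Z| - |N(x_{i_2})\cap Z| \ge \ell + 2\delta - (\ell + 2) = 2\delta - 2$, which is not yet enough — so the extra $\delta - 2$ must come from the still-unused neighbors of $x$ itself: $x$ has $\delta$ neighbors, $t=2$ of them on $C$, and the remaining $\delta - 2$ off-cycle neighbors of $x$ are disjoint from all the $N(x_{i_r})-V(C)$ by Claim~\ref{neighbor}. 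That upgrades the bound to $|Y| \ge \ell + 2\delta - (\ell+2) + (\delta - 2) = 3\delta - 4$, as desired; for $t \ge 3$ the same bookkeeping gives at least as much. The delicate point throughout is making sure no vertex of $Y$ is double-counted — the disjointness Claims~\ref{neighbor} and~\ref{neighbor3} handle the off-cycle part, and $Z$ is disjoint from everything off-cycle by definition — and making the non-crossing claim fully rigorous, which is the step I would spend the most care on.
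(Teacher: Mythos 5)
There is a genuine gap in the proposal, and it is exactly the step you yourself flagged as the one you would ``spend the most care on.'' You assert that if two vertices $x_{i_r}, x_{i_s}$ with $y_{i_r}, y_{i_s} \in N(x)$ were crossing, one could reroute to produce a cycle \emph{longer} than $C$, contradicting maximality. That is false, and it cannot be repaired. Suppose $i<j$, $y_i, y_j \in N(x)$, $y_{k+1}\in N(x_i)$, $y_k\in N(x_j)$ with $i<k<j$ (the first branch of the crossing definition). The natural reroute, which the paper writes out as
$C':=y_{i+1}\dots y_k\, x_j\, y_{j+1}\dots y_i\, x\, y_j\, x_{j-1}\dots y_{k+1}\, x_i\, y_{i+1}$,
gains the vertex $x$ but necessarily \emph{drops} $x_k$, because $x_k$ is stranded between $y_k$ and $y_{k+1}$, both of which are now reached by chords. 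So $C'$ has exactly the same length $2\ell$ as $C$; no contradiction with the maximality of $C$ is available. This is precisely why Claims~\ref{neighbor} and~\ref{neighbor3} work but the crossing case does not: there the off-cycle vertex $y'$ is genuinely new and nothing is removed, whereas here one on-cycle vertex must be sacrificed. The crossing case is therefore not an impossibility to be ruled out; it must be handled as a separate branch. The paper does so (its Case 2) by observing that $(C', x_k)$ is itself an admissible pair for the tightness optimization, so the definition of a tight pair forces $|N(x_k)\cap V(C')|\le t$ and hence $|N(x_k)\cap V(C)|\le t$; that degree cap on the extra vertex $x_k$, together with disjointness, is what delivers $|Y|\ge 3\delta-3$ in that branch. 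Your plan has no substitute for this.

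A secondary omission: your counting silently assumes each $x_{i_r}$ has $\delta-|N(x_{i_r})\cap Z|>0$ neighbors off $C$, but if some $x_{i_r}$ is adjacent to \emph{every} $y$-vertex of $C$ (the paper's $s=0$ situation), the pairing bound from Lemma~\ref{crossing} degenerates and the bookkeeping you sketch does not obviously close. The paper dispatches this case first, again by a tightness argument: it rotates $C$ to a same-length cycle $C'$ that contains $x$ and omits a chosen $x_j$, deduces $|N(x_j)\cap V(C)|\le t$ for every other $x_j$ with $y_j\in T$, and then counts directly. So two of the paper's three cases rest on the tight-pair optimization applied to \emph{new} longest cycles, an idea absent from your outline; the Lemma~\ref{crossing} pairing you describe corresponds only to the paper's remaining case.
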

\begin{proof} Let $T$ be the set of vertices in $N(x )\cap V(C)$ and set $|T|=t$. Note all vertices of $T$ are in $Y$. By Claims~\ref{neighbor} and~\ref{neighbor3}, the neighborhoods all vertices in $\{x \} \cup \{x_i: y_i \in T\}$ are disjoint  outside of $C$. Let $\ell-s$ be the maximum size of $|N(x_i)\cap V(C)|$ for $x_i$ such that $y_i\in T$.

We will first show that $s>0$. If not, then without loss of generality, suppose $y_1\in T$ and ${V(C)\cap Y\subseteq N(x_1)}$. For each $x_j$ such that $y_j\in T$ and $j > 1$, we can form the cycle $$C':=y_1x y_j x_{j-1} y_{j-1} \ldots y_{2}x_1y_{j+1} x_{j+2} \ldots y_\ell x_\ell y_1.$$

The cycle $C'$ has the same length as $C$, and contains all of $C$ except $x_j$. By the choice of $(C,x)$, ${|N(x_j)\cap V(C')|\leq t}$. Thus, $|N(x_j)\cap V(C)|\leq t$ for every $x_j$ such that $y_j\in T$ and $j > 1$.

Since $t\leq |Y\cap V(C)|-1=\ell-1$ and $\ell\leq |X|-1\leq  \delta-1$, we have $\delta\geq t+2$.

So 
\begin{align*}
|Y|\geq |N(x) \cup \Big(\bigcup_{y_i\in T} N(x_i)\Big)|  &\geq (t+1)\delta-d_C(x_1)-\Big[\sum_{y_i\in T, i\neq 1}d_C(x_i)\Big]-d_c(x)+|V(C)\cap Y|\\
&=(t+1)\delta-\ell-(t-1)t-t+\ell\\
    &=3\delta+(t-2)\delta-t^2\\
    &\geq 3\delta+(t-2)(t+2)-t^2=3\delta-4.
\end{align*}
 Hence we may assume $s \geq 1$.

\textbf{Case 1}:
There are no crossing vertices $x_i, x_j$, with $y_i,y_j\in T$. So by Lemma~\ref{crossing}, for each  $x_i$ and $ x_j$, with $y_i,y_j\in T$, $d_C(x_i)+d_C(x_j)\leq \ell+2$.

If $t$ is even, we arbitrarily pair up vertices in $T$ and apply Lemma~\ref{crossing} to obtain that the vertices in $T$ together have degree sum in $C$ at most $t (\ell+ 2)/2$. Therefore  $$|Y|\geq (t+1)\delta-\frac{t}{2}(\ell+2)-t+\ell.$$  Recall $t\leq \ell-1$,  $\ell\leq \delta-1$, and $\delta\geq t+2$ (implying $\delta\geq 4$). Therefore \begin{align*}
 |Y|\geq   (t+1)\delta-\frac{t}{2}(\ell+2)-t+\ell&= (t+1)\delta-\ell(\frac{t}{2}-1)-2t\\
    &\geq  (t+1)\delta-(\delta-1)(\frac{t}{2}-1)-2t\\
    &\geq (\frac{t}{2}+2)\delta-\frac{3}{2}t-1\\
    &=3\delta+(\frac{t}{2}-1)\delta-\frac{3}{2}t-1\\
    &\geq 3\delta+(\frac{t}{2}-1)4-\frac{3}{2}t-1\\
    &\geq 3\delta+\frac{1}{2}t-5\\
    &\geq 3\delta-4.
\end{align*}

If $t$ is odd, fix $y_i\in T$ such that $d_C(x_i)$ is minimum. By Lemma~\ref{crossing}, $d_C(x_i) \leq (\ell + 2)/2$. The other $t-1$ vertices in $T$ can be arbitrarily paired up as in the previous case so that their total degree sum in $C$ is at most $(t-1) (\ell + 2)/2$. Therefore 
$$|Y|\geq (t+1)\delta-\frac{t-1}{2}(\ell+2)-\frac{\ell + 2}{2}-t+\ell,$$  as in the case of $t$  even. Thus $|Y|\geq 3\delta-4$.

%Recall $\ell\leq r-1$, $t\leq \ell-1$,  $t\leq r-2$, and $r\geq t+2\geq 5$. So $Y$ has at least
%\begin{align*}
%    (t+1)r-\frac{t-1}{2}(\ell+2)-t&= (t+1)r-\ell\frac{t-1}{2}-2t+1\\
%    &\geq (t+1)r-(r-1)\frac{t-1}{2}-2t+1\\
%    &=(\frac{t+1}{2}+1)r+\frac{t-1}{2}-2t+1\\
%    &\geq 3r+(\frac{t-3}{2})r-\frac{3}{2}t+\frac{1}{2}\\
%    &\geq 3r+(\frac{t-3}{2})5-\frac{3}{2}t+\frac{1}{2}\\
%    &\geq 3r+t-7\\
%    &\geq 3r-4
%\end{align*}
%vertices. This contradicts $|Y|\leq 3r-5$.

\textbf{Case 2}: There are crossing vertices $x_i$ and $x_j$, such that $y_i,y_j\in T$ for some $1\leq i<j\leq \ell$. Suppose that for some $i+1 \leq k <j$, $y_{k+1} \in N(x_i)$ and $y_{k} \in N(x_{j})$ (the other case is similar so we omit it). 

We will show $N(x_k)\cap N(x )\subset C$. Suppose not. Let $y'\not \in C$ such that $y' \in N(x_k)\cap N(x )$. Then the cycle 
$$y_{i+1}\dots x_ky'x y_i y_{i-1}\dots y_{k+1}x_iy_{i+1}$$
 is longer than $C$, a contradiction.

Next, we will show $N(x_k)\cap N(x_q)\subset C$, for $x_q$ such that $y_q\in T$ and $q\neq k$. Suppose not. Then there is some vertex $y'\not \in C$ such that $y' \in N(x_k)\cap N(x_q)$. If $x_q\in C[j,i]$, then we have the cycle $$y_{i+1}\dots x_ky'x_qy_{q+1}\dots y_ix y_qx_{q-1}\dots y_{k+1}x_iy_{i+1}.$$
If $x_q\in C[i,k]$, then we have the longer cycle $$x_q y_{q+1}\dots y_k x_j y_{j+1}\dots y_q x  y_jy_{j-1}\dots x_k y'x_q.$$
If $x_q\in C[k+1,j]$, then we have the longer cycle $$y_{i+1}\dots x_ky'x_qy_{q+1}\dots y_i x  y_q x_{q-1}\dots y_{k+1}x_iy_{i+1}.$$

In any case we get a cycle with at least $2(\ell+1)$ vertices, a contradiction.

Next, we will show that $|N(x_k)\cap V(C)|\leq t$. The cycle $$C':=y_{i+1}\dots y_k x_j y_{j+1}\dots y_ix y_j x_{j-1}\dots y_{k+1} x_iy_{i+1}.$$
has $2\ell$ vertices, includes $x $, and includes all the vertices of $C$ except $x_k$.   Since
$(C,x)$ is a tight pair, $|N(x_k)\cap V(C')|\leq t$. So $|N(x_k)\cap V(C)|\leq t$. Now, consider the vertices $x ,x_k$ and all $x_q$ such that $y_q\in T.$

If $y_k\in T$, then we know $x_k$ has at most $t$ neighbors on $C$. Additionally, every other vertex $x_q$ such that $y_q\in N(x )$ has at most $\ell-s$ neighbors in $C$. So  $$|Y|\geq (t+1)\delta-(t-1)(\ell-s)-t-t+\ell.$$ 

Since $s\geq 1$, 
\begin{align*}
|Y|\geq    (t+1)\delta-(t-1)(\ell-s)-t-t+\ell&=(t+1)\delta-(t-2)\ell+s(t-1)-2t\\
   &\geq (t+1)\delta-(t-2)(\delta-1)+s(t-1)-2t\\
   &=3\delta+(t-2)+s(t-1)-2t\\
   &\geq 3\delta+t-2+t-1-2t\\
   &= 3\delta-3.
\end{align*}

If $y_k\not \in T$, then by the choice of $x $, $x_k$ has at most $t$ neighbors on $C$ and shares no neighbors outside $C$ with $x $ and $x_q$ such that $y_q\in T$.
So $|Y|\geq (t+2)\delta-t(\ell-s)-t-t+\ell$. This is greater than $(t+1)\delta-(t-1)(\ell-s)-t-t+\ell$ which is at least $3\delta-3$.
\end{proof}

\begin{lem}\label{lemT2} %Suppose $(C, x )$ is a tight pair of $G \in \cG(n,m,\delta)$. 
If $n \leq \delta$ and $Y\cap V(C) \subseteq N(x )$, then for each $x_i \in X \cap V(C)$ and each $y \in N(w) - V(C)$, $w$ separates $y$ from $V(C) - w$. 
\end{lem}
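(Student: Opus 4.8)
The plan is to argue by contradiction. Suppose that for some $x_i\in X\cap V(C)$ and some $y\in N(x_i)-V(C)$ the vertex $x_i$ does \emph{not} separate $y$ from $V(C)-x_i$. Since $C-x_i$ is a path it lies in a single component of $G-x_i$; by assumption $y$ lies in that component too, and since $Y\cap V(C)\subseteq N(x)$ the vertex $x$ is adjacent to all of $y_1,\dots,y_\ell$ and hence lies there as well. I would fix a shortest path $Q$ in $G-x_i$ from $y$ to the set $W:=(V(C)\cup\{x\})\setminus\{x_i\}$, and let $w\in W$ be its other endpoint. By minimality of $Q$ its internal vertices avoid $V(C)\cup\{x\}$, so, writing $R$ for the path obtained by prepending the edge $x_iy$ to $Q$, we get a path $R$ from $x_i$ to $w$ whose interior $\{y\}\cup\operatorname{int}(Q)$ is disjoint from $V(C)\cup\{x\}$. (It is also convenient to recall that, since every $y_k$ is a neighbour of $x$, Claims~\ref{neighbor} and~\ref{neighbor3} make the sets $N(x_1)-V(C),\dots,N(x_\ell)-V(C),N(x)-V(C)$ pairwise disjoint.) The goal is then to use $R$ together with the ``universal'' vertex $x$ to exhibit a cycle strictly longer than $C$, contradicting the maximality of $C$.

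Next I would split according to the type of $w$. If $w=x$: the walk starting $x_i,y$, following $Q$ to $x$, then using the edge $xy_{i+1}$, the arc of $C$ from $y_{i+1}$ around to $y_i$, and the edge $y_ix_i$ is a cycle containing all of $V(C)$ together with $y$, $x$, and $\operatorname{int}(Q)$, hence of length at least $2\ell+2$. If $w\in\{y_i,y_{i+1}\}$: then $x_iw$ is an edge of $C$, and replacing that edge by $R$ gives a cycle on $V(C)\cup\operatorname{int}(R)$, of length at least $2\ell+1$. The substantive case is when $w$ lies on $C$ but far from $x_i$ --- that is, $w=y_j$ with $j\notin\{i,i+1\}$, or $w=x_j$ with $j\neq i$ --- and after reversing the orientation of $C$ if needed we may assume the labels are arranged conveniently. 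Gluing $R$ to one of the two $x_i$--$w$ arcs of $C$ wastes the other arc and need not produce a long enough cycle, so the idea is to \emph{reroute} that other arc through $x$: because $x$ is adjacent to every $y_k$, one can write down an explicit path from $w$ back to $x_i$ that lies in $(V(C)\cup\{x\})\setminus\{x_k\}$ for one suitably chosen index $k$ (when $w\in Y$) or in $V(C)\cup\{x\}$ (when $w\in X$), and that passes through every vertex of $C$ it is allowed to use. Gluing $R$ to this path yields a cycle through all of $V(C)$ save at most one vertex, together with $x$ and $\operatorname{int}(R)$; a short parity check --- the endpoints of $R$ lie in the same part of the bipartition exactly when $w\in X$ --- confirms that the closed walk is an even cycle and that its length exceeds $2\ell$. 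In each case we contradict the maximality of $C$, so no such pair $x_i,y$ can exist, which is exactly the claim.

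The step I expect to be the real work is the far case, and specifically the hands-on construction of the $w$--$x_i$ path through nearly all of $V(C)\cup\{x\}$: one has to choose the omitted index $k$ so that it clashes with neither $i$ nor $j$, verify that the path is vertex-simple (the arcs of $C$ it uses are disjoint and $x$ is inserted exactly once), and get the parity right so that precisely one $x_k$, or none, is dropped. The other ingredients --- the component argument, the two easy subcases, and the disjointness of the off-cycle neighbourhoods --- should be routine.
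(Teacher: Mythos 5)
Your approach is correct and essentially the same as the paper's: in both, one takes a path from $y$ into $V(C)\cup\{x\}$ that is internally disjoint from $V(C)\cup\{x\}$ and avoids $x_i$, and then absorbs essentially all of $C$ into a longer cycle by rerouting the ``unused'' arc of $C$ through $x$, which is possible precisely because $Y\cap V(C)\subseteq N(x)$. Your choice to aim $Q$ at $W=(V(C)\cup\{x\})\setminus\{x_i\}$ (rather than $V(C)\setminus\{x_i\}$) and take a shortest such path is a mild streamlining that unifies what the paper handles as two cases ($x\in V(P)$ versus $x\notin V(P)$), and the explicit $w$--$x_i$ paths you gesture at in the far case (e.g.\ $y_j\,x_{j-1}\cdots y_{i+1}\,x\,y_{j+1}\cdots y_i\,x_i$ when $w=y_j$, dropping $x_j$) do go through and give a cycle of length at least $2\ell+2$ after the parity check you mention.
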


\begin{proof}
Without loss of generality, suppose $x_1 \in C$ does not separate $y\in N(x_1)$ from $V(C) - x_1$. Then $G -x_1$ contains a path $P$ from $y$ to $V(C) - x_1$. Let $z$ be the endpoint of $P$ in $V(C)$, say $z \in \{x_i,y_i\}$ for some $1 < i \leq \ell$. First suppose $x  \notin V(P)$. If $z\neq y_\ell$, then either $y_1 x_1 y P z y_{i} x_{i-1} \ldots y_2 x  y_{i+1} x_{i+1} \ldots y_\ell x_\ell$ or $y_1 x_1 y P z x_{i-1} y_{i-1} \ldots y_2 x  y_{i+1} x_{i+1} \ldots y_\ell x_\ell$ is a longer cycle than $C$. If $z=y_\ell$, then $y_2x_1Py_\ell x_\ell y_1 x y_{\ell - 1} x_{\ell - 2}\ldots y_2$ is a longer cycle. Otherwise, let $P'$ be the segment of $P$ from $y$ to $x $. Then we instead take the cycle $y_1x_1yP'x  y_2 x_2 \ldots y_\ell x_\ell y_1$. 
\end{proof}

\section{Proofs of Theorems~\ref{mainj} and~\ref{jackson2}}

%{\bf Theorem~\ref{mainj}}. {\em Fix integers $m,n,r$ such that $n \geq r \geq m$ and $n \leq 3r-5$. If $G \in \cG(n,m,\delta)$ is 2-connected, then $G$ contains a cycle of length $2m$, i.e., a cycle that covers $X$.}
%
%\begin{proof}

{\em Proof of Theorem~\ref{mainj}}. Suppose a longest cycle of $G$ has length $2\ell$ where $\ell < n$ and fix a tight pair $(C, x )$. Say $C = y_1 x_1 \ldots y_\ell x_\ell y_1$, and set $t := |N(x ) \cap V(C)|$.  

{\bf Case 1}: $t \leq 1$. Because $G$ is 2-connected, there exist paths $P_1$ and $P_2$ from $x $ to $V(C)$ such that $P_1$ and $P_2$ are disjoint except at $x $, and both $P_i$ are internally disjoint from $V(C)$.  Let $z_1$ and $z_2$ be the endpoints of $P_1$ and $P_2$ respectively that are not $x $. Among all such paths, choose $P_1$ and $P_2$ so that $|\{z_1, z_2\} \cap Y|$ is maximum. Often we will use $P_1 \cup P_2$ to refer to the combined path from $z_1$ to $z_2$.  

{\bf Case 1.1}: Either $z_1 \in Y$ or $z_2 \in Y$. Without loss of generality, let $z_1 = y_1$. Note that this is exactly the case where $t = 1$, because the second vertex in $P_1$ is a vertex of $X$ not in $V(C)$, so we may choose this vertex to be $x $ and $P_1$ to be the path $x y_1$. Since $t < 2$, $|(P_2 \cap Y) - V(C)| \geq 1$.

Say $z_2 \in \{x_j, y_j\}$ for some $1 < j\leq \ell$. Also, $j \neq \ell$, otherwise we could replace the vertex $x_\ell$ in $C$ with the path $P_1 \cup P_2$ to obtain a longer cycle. 

By Claim~\ref{neighbor}, we have that $N(x_1)$ and $N(x )$ do not intersect outside of $C$. Furthermore, we also claim that $N(x_1)$ does not intersect $N(x_{j+1})$ outside of $C$. Suppose that ${y' \in (N(x_1) \cap N(x_{j+1})) - V(C)}$. 

If $y' \in V(P_1 \cup P_2)$, then let $P'$ be the segment of $P_1 \cup P_2$ from $y_1$ to $y'$. Then 
\[ y_1 P' y' x_1 y_2 \ldots y_\ell x_\ell y_1\] is a cycle with at least $2(\ell + 1)$ vertices, a contradiction.

So we may assume that $y' \notin V(C) \cup V(P_1 \cup P_2)$. Then \[C':= y_1 P_1 \cup P_2 z_2 y_j x_{j-1} y_{j-1} \ldots y_2 x_1 y' x_{j+1} y_{j+2} \ldots y_\ell x_\ell y_1\] is a cycle which contains $(V(C) \cap Y) - y_{j+1}$ and $y' \cup (V(P_1 \cup P_2) \cap Y)$. That is, $|V(C')| \geq 2(\ell + 1)$, a contradiction.

Next, we show that $x_1$ and $x_{j+1}$ are not crossing. Suppose first that for some $j+1 < k \leq \ell$, $y_k \in N(x_1)$ and $y_{k+1} \in N(x_{j+1})$. Then 
\[y_1 P_1 \cup P_2  z_2 y_j x_{j-1} y_{j-1} \ldots y_2x_1 y_k x_{k-1} \ldots y_{j+2}x_{j+1} y_{k+1} x_{k+1} \ldots y_\ell x_\ell y_1\]
is at least as long as $X$, contains $y_k$ and $y_{k+1}$, and does not contain $x_{k}$. Similarly, if $x_{k+1} \in N(x_1)$ and $x_{k} \in N(x_{j+1})$ for some $2 \leq k \leq j$, then  
\[y_1 P_1 \cup P_2  z_2 y_j x_{j-1} y_{j-1} \ldots y_{k+1}x_1 y_2 \ldots x_{k-1}y_k x_{j+1} y_{j+2} \ldots y_m x  y_1\] is a cycle with the same properties. Hence by the choice of $(C, x )$ we have that $N(x_{k})$ contains at most $t = 1$ vertices in $C$. But $N(x_{k}) \supseteq \{y_k, y_{k+1}\}$, a contradiction.

Therefore by Lemma~\ref{crossing}, $d_C(x_1) + d_C(x_{j+1}) \leq \ell + 2$. 

Note that 
\begin{equation}\label{j+1} \mbox{$N(x )$ and $N(x_{j+1})$ have at most one common vertex outside of $C$.}
\end{equation} To see this, suppose that $\{y', y''\} \subseteq (N(x ) \cap N(x_{j+1})) - V(C)$. If both $y', y'' \in V(P_1 \cup P_2)$, with say $y'$ appearing before $y''$ in $P_1 \cup P_2$, then let $P'$ be the segment of $P_1 \cup P_2$ from $y'$ to $z_2$, oriented backwards (from $z_2$ to $y'$). Then \[y_1 x_1 \ldots y_j  z_2 P' y' x_{j+1} y_{j+2} \ldots y_\ell x_\ell y_1 \] is a longer cycle than $C$. Otherwise, we may assume that $y'' \notin V(P_1 \cup P_2)$. Then \[y_1 x_1 \ldots y_{j}  z_2 P_2 x  y'' x_{j+1} y_{j+2} \ldots y_\ell x_\ell y_1\] is a longer cycle.

Putting it all together, we get
\begin{eqnarray*}|Y|\geq |N(x_1) \cup N(x_{j+1})\cup N(x )| & \geq & 3\delta - (d_C(x_1) + d_C(x_{j+1}) + d_C(x ))\\
& & - |(N(x_{j+1}) \cap N(x )) - V(C)| + |V(C)\cap Y|\\
&\geq & 3\delta - (\ell + 2 + 1) - 1 + \ell\\
& = & 3\delta-4,
\end{eqnarray*} a contradiction.

{\bf Case 1.2}: Both $z_1 \in X$ and $z_2 \in X$. Note that this implies $t = 0$. For simplicity, let $z_1 = x_1$ and $z_2 = x_j$ for some $2 \leq j \leq \ell$.  We must have that $|(V(P_1 \cup P_2)) \cap Y| \geq 2$ since $G$ is bipartite.

We claim first that $N(x_2)\cap N(x_{j+1})\subseteq V(C)$. Suppose that ${y' \in (N(x_2) \cap N(x_{j+1})) - C}$. If $y' \in V(P_1 \cup P_2)$, then let $P_1'$ be the segment of $P_1 \cup P_2$ from $x_1$ to $y'$, let $P_2'$ be the segment of $P_1 \cup P_2$ from $x_j$ to $y'$, and let $P'$ be the longer of the paths $P_1'$ and $P_2'$. Note that $P'$ must contain $x $ and at least 2 vertices from $Y$. Then either
\[y_1 x_1 P' y' x_2 y_3 \ldots y_\ell x_\ell x_1 \;\; \text{ or } \;\; y_1 x_1 \ldots y_j P' y' x_{j+1} y_{j+2} \ldots y_\ell x_\ell y_1\]
is a cycle with at least $2(\ell - 1 + 2)$ vertices, a contradiction.

Next, suppose $y' \notin V(P_1 \cup P_2)$. Then the cycle \[y_1 x_1 P_1 \cup P_2 x_j y_j x_{j-1} \ldots y_3 x_2 y' x_{j+1}y_{j+2} \ldots y_\ell x_\ell y_1\] contains at least $2(\ell - 2 + 2 + 1)$ vertices. This proves that $N(x_2)$ and $N(x_{j+1})$ are disjoint outside of $C$. 
The same proof as for~\eqref{j+1} shows that also $N(x_2)$ and $N(x )$ intersect at at most one vertex outside of $C$, and same for $N(x_{j+1})$ and $N(x )$. 

Finally, we show that $x_2$ and $x_{j+1}$ are not crossing. Otherwise, if there exists some $j+1 < k \leq \ell$, with $y_k \in N(x_2)$ and $y_{k+1} \in N(x_{j+1})$. Then the cycle
\[C':= y_1 P_1 \cup P_2 y_j x_{j-1} y_{j-1} \ldots y_3x_2 y_k x_{k-1} \ldots y_{j+2}x_{j+1} y_{k+1} x_{k+1} \ldots y_\ell x_\ell y_1\] contains at least as many vertices as $C$. Furthermore $V(C) - \{y_2, y_{j+1}\} \subset V(C')$, and $x_{k} \notin V(C')$. But $|N(x_k) \cap C'| \geq 2$, contradicting the choice of $(C, x )$. The case where $x_{k+1} \in N(x_1)$ and $x_{k} \in N(x_{j+1})$ for some $2 \leq k \leq j$ is similar so we omit the proof. By Lemma~\ref{crossing}, this implies $d_C(x_2) + d_C(x_{j+1}) \leq \ell + 2$. 

Thus 
\begin{eqnarray*}|Y|\geq |N(x_1) \cup N(x_{j+1})\cup N(x )| & \geq & 3\delta - (d_C(x_2) + d_C(x_{j+1}) + d_C(x )) - |(N(x_{2}) \cap N(x )) - V(C)|\\
&& -|(N(x_{j+1}) \cap N(x )) - V(C)| + |V(C)\cap Y|\\
&\geq &3\delta - (\ell + 2 + 0) - 1 -1 + \ell \\
&= &3\delta-4,\end{eqnarray*}
a contradiction.

{\bf Case 2}: $2 \leq t \leq \ell - 1$. Apply Lemma~\ref{lemT} to obtain $n \geq 3\delta-4$, a contradiction.

%{\bf Case 3}: $t = \ell$. Since $d(x_m)\geq r>\ell$, there is $z_1\in N(x_m)-C$. Since $G$ is $2$-connected, $G-x_m$ has
%an $z_1,C$-path $P=z_1,\ldots,z_q$ where  $z_q\in C$. If $z_q=y_i$, then the cycle 
%$$y_{i+1}x_{i+2}y_{i+2}\ldots y_iz_{q-1}\ldots z_1x_m y_{i+1}$$
%is longer than $C$ because $C$ loses $x_{i+1}$ but receives at least two new vertices, $x_m$ and $z_1$.
%
%So, suppose $z_q=x_j$. Then the cycle 
%$$y_{j}x_{j+1}y_{j+1}\ldots x_j z_{q-1}\ldots z_1x_m y_{j}$$
%is longer than $C$. 

{\bf Case 3}: $t = \ell$. By Lemma~\ref{lemT2}, every vertex in $X \cap V(C)$ is a cut vertex of $G$, a contradiction. \qed

\bigskip 
{\em Proof of Theorem~\ref{jackson2}}. Suppose $G \in \cG(n,2\delta-1, \delta)$ contains no cycle of length $2n$.
If $n=2$ and $G$ has no cycles, then $G=G_2(1,1)$. 
 If $n\geq 3$, since $m\leq 2\delta-1$, $G$ has a cycle. Let $(C,x )$ be a tight pair, and set $t:= |V(C) \cap N(x )|$.

{\bf Case 1}: $t = 0$. Since $|Y| = 2\delta-1$, for every $x, x' \in X$, $|N(x) \cap N(x')| \geq 1$. In particular, for each $x_i \in V(C) \cap X$, $N(x_i) \cap N(x )$ contains at least 1 vertex, say $y'_i \in Y - V(C)$. 

If for some $1 \leq i \leq \ell$, $y'_i \neq y'_{i+1}$ (indices taken mod $\ell$), then the cycle $y_1 x_1 \ldots y_{i} x_i y'_i x  y'_{i+1} x_{i+1} \ldots y_\ell x_\ell y_1$ contains more vertices than $C$. Otherwise, the cycle $C':=y_1 x_1 \ldots y_i x_i y'_{i+1} x_{i+1} \ldots y_\ell x_\ell y_1$ is also a longest cycle of $G$, but $|V(C') \cap N(x )| > 0$, contradicting the choice of $(C, x )$ as a tight pair.

{\bf Case 2}: $t = 1$. Without loss of generality, let $N(x ) = y_1$. We have that $N(x ) - V(C)$ contains at least $\delta-1$ vertices. If $N(x ) \cap N(x_1)$ contains a vertex $y \in Y - V(C)$, then we get the longer cycle $y_1 x  y x_1 y_2 \ldots y_\ell x_\ell x_1$. So $N(x_1) \cap N(x ) = \{y_1\}$, and furthermore since ${|Y| = 2\delta-1}$, ${Y = N(x_1) \cup N(x )}$, and $Y\cap V(C) \subseteq N(x_1)$. Note that by symmetry, $N(x_\ell) = N(x_1)$.  Suppose that for some $1 < i < \ell$, there exists $y \in (N(x ) \cap N(x_i)) - y_1$. Then we obtain the longer cycle $y_1 x_\ell y_\ell \ldots x_{i+1}y_{i+1} x_1 y_2 \ldots y_{i}x_i y x  y_1$. This shows that $N(x_i) = N(x_1)$ for all $1 \leq i \leq \ell$. 

By the choice of $(C, x )$ as a tight pair, $|N(x') \cap V(C)| \leq 1$ for all $x' \in X - V(C)$. Since ${Y = N(x_1) \cup N(x )}$, ${N(x) - V(C) = N(x ) - y_1}$. Hence the vertices of $V(C)$ induce a complete bipartite graph,  as do the vertices of $V(G) - V(C)$. Furthermore, every $x \in X - V(C)$ contains exactly one neighbor in $V(C)$. It is easy to show that $G$ contains a cycle of length $2n$ unless for every $x \in X - V(C)$, $N(x) \cap V(C) = \{y_1\}$. Therefore $G$ is isomorphic to $G_2(n - \ell, \ell)$.

{\bf Case 3}: $2 \leq t \leq \ell - 1$. If  $\delta\geq 4$, then by Lemma~\ref{lemT} we get $|Y| \geq 3\delta-4$, a contradiction. Suppose $\delta=3$ and $G$ has no cycle of length $2n$. Since $G$ contains a cycle, $n\neq 2$ and so $n=3$. It is easy to check that $G=G_1(n)$ for the case $t \geq 2$.

{\bf Case 4}: $t = \ell$. By Lemma~\ref{lemT2},  for any $x_i \in X \cap V(C)$ and each $y \in N(x_i) - V(C)$, $x_i$ separates $y$ from $V(C) - x_i$. In particular, this implies that $y \notin N(x_j)$ for any $j \neq i$. Since $|V(C) \cap Y| = \ell \leq n-1 \leq \delta-1$, each $x_i$ has at least $\delta-\ell$ neighbors outside of $V(C)$ that are shared by no other $x_j$. Then $|Y| \geq \ell + (\ell + 1)(\delta-\ell) = \ell \delta - \ell^2 + \delta = \delta +\ell (\delta- \ell)  \geq \delta + (\delta-1)(1)$, where equality holds only if $\ell = \delta-1$ (so $\delta=n$), and each vertex in $X$ has exactly one neighbor outside of $C$. That is, $G$ is isomorphic to $G_1(n)$. 
\qed

\section{Proofs of Theorem~\ref{mainpan} and Corollary~\ref{mainpan2} }

%
%{\bf Theorem~\ref{mainpan}}. {\em Let $n \geq r \geq m$ and let $G = (X,Y)$ be a bipartite graph with $|X|= m$, $|Y|=n$ and $d(x) \geq r$ for every $x \in X$ (i.e., $G \in \cG(n,m,\delta)$. Suppose that for every $X' \subseteq X$ with $|X'| \geq 3$, there exists a subset $Y' \subseteq Y$ with $|Y'| \geq |X'|$ such that $G[X' \cup Y']$ is 2-connected. If $n \leq 3r-5$, then $G$ is $X$-super-pancyclic.
%}
{\em Proof of Theorem~\ref{mainpan}}. Fix $G \in \cG(n,m,\delta)$ with partition $(X,Y)$. We will show that for every $X' \subseteq X$ with $|X'| \geq 3$, there exists a cycle $C$ in $G$ such that $V(C) \cap X = X'$. 

We proceed by induction on $|X'|$. For the base case, suppose $|X'| = 3$. Let $Y' \subseteq Y$ such that $G':=G[X' \cup Y']$ is 2-connected. It is easy to check, via an ear-decomposition argument for instance, that a bipartite 2-connected graph has a cycle of length at least 6 unless one of its parts has size 2. But we have  $|Y'| \geq |X'| \geq 3$.
Hence  for each $X'\subset X$ with $|X'| = 3$, $G$ contains a cycle $C$ of length 6 with $V(C)\cap X=X'$. This yields the following facts.

\begin{claim}\label{codegree}(i) For each $x, x' \in X$, $|N(x) \cap N(x')| \geq 1$. 

\hspace{21mm}(ii) For any $x, x', x'' \in X$, ${|N(x) \cap (N(x') \cup N(x''))| \geq 2}
$.\end{claim}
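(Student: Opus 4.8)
The plan is to derive Claim~\ref{codegree} directly from the base case of the induction, namely that for every $X' \subseteq X$ with $|X'| = 3$ there is a cycle $C$ in $G$ with $V(C) \cap X = X'$. The key point is that such a cycle has length $6$ (it visits exactly $3$ vertices of $X$ and alternates between $X$ and $Y$), so it visits exactly $3$ vertices of $Y$, and each consecutive pair of $X$-vertices on the cycle shares a common $Y$-neighbor lying on the cycle.

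For part (i): given $x, x' \in X$, pick any third vertex $x'' \in X$ (possible since $n \geq 3$, which follows from $\delta \geq n$ and the fact that we are considering $X'$ with $|X'| \geq 3$, so $|X| \geq 3$) and take $X' = \{x, x', x''\}$. The $6$-cycle through $X'$ contains a subpath $x\,y\,x'$ for some $y \in Y$, so $y \in N(x) \cap N(x')$ and the intersection is nonempty.

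For part (ii): given $x, x', x'' \in X$, take $X' = \{x, x', x''\}$ and consider the $6$-cycle $C$ with $V(C) \cap X = X'$, say $C = x\, y_1\, x'\, y_2\, x''\, y_3\, x$. Then $y_1 \in N(x) \cap N(x')$ and $y_3 \in N(x) \cap N(x'')$, and $y_1 \neq y_3$ since they are distinct vertices of the cycle. Hence $\{y_1, y_3\} \subseteq N(x) \cap (N(x') \cup N(x''))$, giving $|N(x) \cap (N(x') \cup N(x''))| \geq 2$.

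There is no real obstacle here: the only thing to be slightly careful about is that the claim is a consequence of the base case already established, not an independent statement, so the proof should simply invoke the existence of the $6$-cycle. If $x, x', x''$ are not all distinct the statement in (ii) is weaker and reduces to (i) (or is trivially implied when two of them coincide, since then $N(x) \cap (N(x') \cup N(x'')) \supseteq N(x) \cap N(x')$ has size at least $1$; in fact in the intended application the three vertices are distinct, so one may simply state it for distinct $x, x', x''$). I would phrase the proof in one or two sentences: apply the base case to the triple in question and read off the two shared neighbors from the resulting $6$-cycle.
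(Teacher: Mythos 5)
Your proof is correct and is essentially the same as the paper's: both derive the claim from the already-established base case that every triple in $X$ lies on a $6$-cycle, and both read off the shared neighbors as the two $Y$-vertices of that cycle adjacent to $x$. The paper's version is just stated more tersely, taking the cycle $y_1 x y_2 x' y_3 x'' y_1$ and observing $y_2 \in N(x)\cap N(x')$ for (i) and $\{y_1,y_2\}\subseteq N(x)\cap(N(x')\cup N(x''))$ for (ii).
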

\begin{proof}
For (i), let $x'' \in X - \{x,x'\}$. Then $G$ contains a cycle, say $y_1 x y_2 x' y_3 x'' y_1$, so ${y_2 \in N(x) \cap N(x')}$. For (ii), using the same cycle, we have $N(x) \cap (N(x') \cup N(x'')) \supseteq \{y_1, y_2\}$. 
\end{proof}

For the induction step, fix $X' \subseteq X$ with $k:=|X'| \geq 4$ and consider the subgraph ${G_{X'} = G[X' \cup N(X')]}$. We wish to show that $G_{X'}$ contains a cycle using all of $X'$. Suppose not. Then by induction hypothesis, for each $x \in X'$, $G$ contains a cycle $C_x$ such that $V(C_x) \cap X = X' - x$. In particular, this cycle must be contained in $G_{X'}$.

Among all $x \in X'$, pick $x$ such that $|N(x) \cap V(C_x)|$ is maximum. Let $C_x = y_1 x_1 \ldots y_{k-1}x_{k-1}y_1$ and $t := |N(x) \cap V(C_x)|$

{\bf Case 1}: $t=0$. By Part (i) of Claim~\ref{codegree},  $|N(x) \cap N(x_i)| \geq 1$ for all $1 \leq i \leq k-1$. If there exists some $y'_i$, $y'_{i+1}$ such that $y'_j \in N(x) \cap N(x_j)$ for $j \in \{i, i+1\}$ and $y'_i \neq y'_{i+1}$, then $G_{X'}$ contains the cycle

\[C':= y_1 x_1 \ldots x_i y'_{i} x y'_{i+1} x_{i+1}y_{i+2} \ldots y_{k-1} x_{k-1} y_1\] with $2(k-1-1 + 2)$ vertices. I.e., $V(C') \cap X = X'$. 

Otherwise, suppose for every $i$, $N(x_i) \cap N(x) = \{y'\}$. Then $N(x) \cap (N(x_1) \cup N(x_2)) = \{y'\}$, contradicting Part (ii) of Claim~\ref{codegree}.

{\bf Case 2}: $t=1$. Without loss of generality, say $y_{k-1} \in N(x)$. By Part (ii) of Claim~\ref{codegree}, there exists some $y' \in N(x) \cap (N(x_{k-2}) \cup N(x_{k-1}))$ such that $y' \neq y_{k-1}$. Since $t=1$, $y' \notin V(C_x)$. Then either
\[y_1 \ldots y_{k-2}x_{k-2} y' x y_{k-1} x_{k-1} y_1 \; \text{ or } \; y_1 \ldots y_{k-1} x y' x_{k-1} y_1\] is a cycle in $G_{X'}$ which contains all of $X'$. 

{\bf Case 3}: $2 \leq t \leq k-2$. Apply Lemma~\ref{lemT} to $G'_{X'}$, $C_x$, and $x$ to obtain that $n \geq 3\delta-4$, a contradiction.

{\bf Case 4}: $t = k-1$. We claim that for each  $y \in V(G_{X'}) - V(C)$, $|N_{X'}(y)| = 1$. If not, then there exists some $y \notin V(C)$ and some $1 \leq i < j \leq k$ such that $y \in N(x_i) \cap N(x_j)$. But this contradicts Theorem~\ref{lemT2}. Therefore every 2-connected subgraph of $G$ containing $X'$ is a subgraph of $G[X' \cup V(C)]$. But $|V(C) \cap Y| < |X'|$, a contradiction. \qed

%{\bf Case 4}: $t = k-1$. Therefore $V(C_x) \cap Y \subseteq  N(x)$. 
%
% We claim that $G_{X'}$ (and therefore $G$) contains at least $k$ eligible vertices in $Y$ with degree at least 2. To see this, let $Y' \subseteq N(X')$ such that $|Y'| \geq k$ and $G' :=G [X' \cup Y']$ is 2-connected. Note that $G' \subseteq G_{X'}$. Then every vertex in $Y'$ has degree at least 2 in $G'$.
%
%Since $|V(C)| = k-1$, this implies that for some pair of vertices $\{x', x''\} \subset \{x_1, \ldots, x_{k-1}, x\}$, there exists $y' \in Y - V(C)$ such that $y' \in N(x') \cap N(x'')$. 
%
%If $\{x', x''\} = \{x_i, x_j\}$ for some $1 \leq i < j \leq k-1$, then 
%\[y_1 x_1 \ldots y_i x_i y' x_j y_j x_{j-1}\ldots x_{i+1} y_{i+1} x y_{j+1} x_{j+1} \ldots y_{k-1}x_{k-1} y_1\] is a cycle of $G_{X'}$ which contains all of $X'$.
%
%Otherwise, say $y' \in N(x) \cap N(x_i)$. Then instead we take the cycle $y_1 x_1 \ldots x_{i} y' x y_{i+1}x_{i+1} \ldots y_{k-1}x_{k-1}$. 

%{\bf Corollary~\ref{mainpan2}}.{\em Let $\cH$ be an $n$-vertex hypergraph with minimum degree $\delta(\cH) \geq n$. Suppose that for any set of vertices $A \subseteq V(\cH)$, there exists a set of edge $B \subseteq E(\cH)$ with $|B| \geq |A|$ such that $\cH(A, B)$ is 2-connected. If $|E(\cH)|\leq 3n-5$, then $\cH$ is super-pancyclic.}
\medskip

{\em Proof of Corollary~\ref{mainpan2}}.
Fix an $n$-vertex hypergraph $\cH$ with $\delta(\cH) \geq n$ and at most $3n-5$ edges. Let $I(\cH)$ be the incidence graph of $\cH$.
% That is, $G(\cH)$ is the incidence   graph   of the dual of $\cH$. 

Then $I(\cH) \in \cG(n, |E(\cH)|, n)$ with $|E(\cH)| \leq 3n-5$. Translating from the language of hypergraphs to bipartite graphs, for each $X' \subseteq X$ with $|X'| \geq 3$, there exists $Y ' \subseteq Y$ with $|Y'| \geq |X'|$ such that $G(\cH)[X' \cup Y']$ is 2-connected. By Theorem~\ref{mainpan}, $G(\cH)$ is $X$-super-pancyclic; equivalently, $\cH$ is super-pancyclic.
\qed

\bigskip
{\Large {\bf Concluding remarks.}}
%\begin{itemize} \item 

1. In ~\cite{KL}, a  significantly weaker version of Conjecture~\ref{jacksonconj}(ii) is proved.

2.  Let the {\bf dual incidence graph} of $\cH$ be the bipartite graph $I(\cH)$ with parts $(X, Y)$ where $X = E(\cH)$, $Y= V(\cH)$ such that for $e \in X, v \in Y,$ $ev\in E(I(\cH))$ if and only if the vertex $v$ is contained in the edge $e$ in $\cH$. 

By applying Theorem \ref{mainpan}, we obtain the following.

\begin{cor}
Let $\cH$ be a hypergraph with edge cardinalities at least $r$ such that $e(\cH) \leq r$ and $|V(\cH)| \leq 3r-5$. If the dual incidence graph of $\cH$ satisfies (\ref{lll}), then
 for any set of edges $B$ with $|B| \geq 3$, $\cH$ contains a Berge cycle using exactly the edges of $B$.
\end{cor}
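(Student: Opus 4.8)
The plan is to derive this corollary as a direct dualization of Theorem~\ref{mainpan}, exactly parallel to how Corollary~\ref{mainpan2} is obtained from it. The key observation is that the dual incidence graph of $\cH$ is a bipartite graph $G$ with parts $(X,Y)$ where $X = E(\cH)$ and $Y = V(\cH)$, and a vertex $e \in X$ (an edge of $\cH$) is adjacent in $G$ to exactly the vertices of $\cH$ it contains. Thus the hypotheses translate directly: the condition that every edge of $\cH$ has cardinality at least $r$ says precisely that every $e \in X$ has $d_G(e) \geq r$, so $\delta := r$ is a valid minimum-degree bound on the $X$-side; the condition $e(\cH) \leq r$ says $|X| = e(\cH) \leq r = \delta$, i.e. $n \leq \delta$ in the notation of Theorem~\ref{mainpan} (with ``$n$'' now playing the role of $|X|$); and $|V(\cH)| \leq 3r-5$ says $|Y| = m \leq 3\delta - 5$. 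Hence $G \in \cG(|X|, |Y|, r)$ with $|X| \leq \delta$ and $|Y| \leq 3\delta - 5$.

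First I would make this translation explicit and note that the hypothesis that the dual incidence graph satisfies~(\ref{lll}) is exactly the condition needed to invoke Theorem~\ref{mainpan}. Applying Theorem~\ref{mainpan} to $G$, we conclude that $G$ is $X$-super-pancyclic: for every $X' \subseteq X$ with $|X'| \geq 3$, there is a cycle $C$ in $G$ with $V(C) \cap X = X'$. Finally I would translate this conclusion back into hypergraph language. A cycle $C$ in the dual incidence graph alternates between edges of $\cH$ and vertices of $\cH$; writing $C = e_1 v_1 e_2 v_2 \ldots e_\ell v_\ell e_1$ where $\{e_1,\dots,e_\ell\} = X' = B$ and the $v_i$ are distinct vertices of $\cH$, we have $v_i \in e_i \cap e_{i+1}$ for all $i$ (indices mod $\ell$). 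This is precisely a Berge cycle of $\cH$ whose set of edges is exactly $B$ (with the $v_i$ as base vertices), so for every $B \subseteq E(\cH)$ with $|B| \geq 3$, $\cH$ has a Berge cycle using exactly the edges of $B$.

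There is essentially no genuine obstacle here — the content is entirely in Theorem~\ref{mainpan}, and the corollary is a bookkeeping exercise in matching up the roles of vertices and edges under dualization. The one point requiring a little care is the correspondence between cycles in a bipartite graph and Berge cycles in a hypergraph when the hypergraph is ``read off'' from the $X$-side rather than the $Y$-side: one must check that distinctness of the $X$-vertices on the cycle corresponds to distinctness of the edges used in the Berge cycle, and that the alternating structure of the cycle gives exactly the containment relations $v_i \in e_i \cap e_{i+1}$ required in the definition of a Berge cycle. Both are immediate from the definitions of Berge cycle and (dual) incidence graph given in the excerpt, so the proof is short.
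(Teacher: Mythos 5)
Your proof is correct and follows exactly the route the paper intends: translate the hypotheses of the corollary into the setting of Theorem~\ref{mainpan} applied to the dual incidence graph (with $X = E(\cH)$, $Y = V(\cH)$, $n = e(\cH) \leq r = \delta$, $m = |V(\cH)| \leq 3\delta - 5$), invoke $X$-super-pancyclicity, and read the resulting alternating cycle back as a Berge cycle whose edge set is the prescribed $B$. The only cosmetic point worth noting is that the indexing of the cycle $e_1 v_1 e_2 v_2 \cdots e_\ell v_\ell e_1$ gives $v_i \in e_i \cap e_{i+1}$, so one should relabel $e'_i := e_{i+1}$ to match the paper's convention $v_i, v_{i+1} \in e'_i$ in the definition of a Berge cycle; the set of edges used is unchanged.
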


\medskip
{\bf Acknowledgement.}
 We thank Misha Lavrov for pointing our attention to Jackson's Conjecture.

%\item
%\end{itemize}

\end{document}